\documentclass{amsart}
\usepackage{amsaddr}
\usepackage[english]{babel}
\usepackage{comment}
\usepackage[letterpaper,top=2cm,bottom=2cm,left=3cm,right=3cm,marginparwidth=1.75cm]{geometry}
\usepackage{todonotes}

\usepackage{amsmath}
\usepackage{amssymb}
\usepackage{graphicx}
\usepackage[colorlinks=true, allcolors=blue]{hyperref}
\usepackage{cleveref}
\usepackage{algorithm}
\usepackage{algpseudocode}
\usepackage{caption}
\usepackage{subcaption}
\usepackage{xcolor}

\newcommand{\N}{N}

\DeclareMathOperator{\indeg}{indeg}
\DeclareMathOperator{\outdeg}{outdeg}

\usepackage{tikz}
\usepackage{tkz-graph}
\usetikzlibrary{shapes,positioning,calc}
\tikzstyle{tre}=[circle,draw,inner sep = 0pt, minimum size=1.5mm]
\tikzstyle{pe}=[circle,draw,inner sep=0pt,minimum size=1.5mm]
\tikzstyle{mi}=[circle,draw,inner sep=0pt,minimum size=1.5mm, fill = black!60]
\tikzstyle{mi2}=[circle,color = lightgray,draw,inner sep=0pt,minimum size=1.5mm, fill = lightgray]
\tikzstyle{mini}=[circle,draw,inner sep=0pt,minimum size=0.8mm, fill = black!60]
\tikzstyle{minil}=[circle,draw,color = lightgray,inner sep=0pt,minimum size=0.8mm, fill = lightgray]
\tikzstyle{minir}=[circle,color = red,draw,inner sep=0pt,minimum size=0.8mm, fill = red!60]
\tikzstyle{minib}=[circle,color = blue,draw,inner sep=0pt,minimum size=0.8mm, fill = blue!60]
\tikzstyle{mimarc}=[circle,draw,inner sep=0pt,minimum size=1.5mm, fill = red!60]
\tikzstyle{bxmarc}=[rectangle,draw,inner sep=0pt,line width=0.2pt,minimum size=1.5mm, fill = red!60]
\tikzstyle{bb}=[circle,draw,inner sep=0pt,line width=1.5pt,minimum size=4.5mm]
\tikzstyle{rbx}=[circle,fill,draw,inner sep=0pt,minimum size=1.5mm]
\tikzstyle{bx}=[rectangle,draw,inner sep=0pt,line width=0.2pt,minimum size=1.5mm, fill = black!60]
\tikzstyle{bxb}=[rectangle,draw,inner sep=0pt,line width=0.2pt,minimum size=1.5mm, fill = blue!70]
\tikzstyle{minibx}=[rectangle,draw,inner sep=0pt,line width=0.2pt,minimum size=0.8mm, fill = black!60]
\tikzstyle{minilbx}=[rectangle,color = lightgray,draw,inner sep=0pt,line width=0.2pt,minimum size=0.8mm, fill = lightgray]
\tikzstyle{minirbx}=[rectangle,color = red,draw,inner sep=0pt,line width=0.2pt,minimum size=0.8mm, fill = red!60]
\tikzstyle{minibbx}=[rectangle,color = blue,draw,inner sep=0pt,line width=0.2pt,minimum size=0.8mm, fill = blue!60]
\tikzstyle{label}=[circle,inner sep=0pt,minimum size=0.2mm, fill = white]
\tikzstyle{ppalpath}=[style={->},
decoration={snake,pre length=0.05cm, post length=0.05cm,
amplitude=.4mm,segment length=2mm},
decorate]
\tikzset{
    between/.style args={#1 and #2}{
         at = ($(#1)!0.5!(#2)$)
    }
}
\tikzset{
    amunt/.style args={#1 and #2}{
         at = ($(#1)!0.25!(#2)$)
    }
}

\newtheorem{thm}{Theorem}
\newtheorem{prop}[thm]{Proposition}
\newtheorem{lem}[thm]{Lemma}
\newtheorem{cor}[thm]{Corollary}

\newtheorem{defn}[thm]{Definition}
\newtheorem{exm}[thm]{Example}
\newtheorem{rmk}[thm]{Remark}

\title[Characterization of tree-child networks in terms of $\mu$ vectors]{Characterization of tree-child networks \\ in terms of $\mu$ vectors}

\author{%
    Toni Fuentes\textsuperscript{a},   Vincent Moulton \textsuperscript{b}, Katharina T.~Huber \textsuperscript{b}, Joan Carles Pons\textsuperscript{a,*}.
     \\[1em]
    \textsuperscript{a} Department of Mathematics and Computer Science, \\ 
    Universitat de les Illes Balears, Spain \\[1em]
        \textsuperscript{b} School of Computing Sciences, 
University of East Anglia, Norwich, NR4~7TJ, UK \\[1em]
    \textsuperscript{*} Corresponding author: joancarles.pons@uib.es
}

\begin{document}

\begin{abstract}
We characterize tree-child phylogenetic networks in terms
of their \(\mu\)-representations. First, we give a structural characterization of
tree-child networks by means of ordered tree-path decompositions. We then translate
this decomposition into a set of purely vectorial conditions on finite subsets
\(M\subseteq\mathbb N^n\). We prove that such a set \(M\) is the
\(\mu\)-representation of a tree-child phylogenetic network if and only if it is
tree-child \(\mu\)-compatible.  This provides a feasibility criterion for tree-child
\(\mu\)-representations which can be used as a basis for reconstruction and further algorithmic
applications. Note that this paper presents results arising from ongoing research
on tree-child networks and that the results will be further developed and placed 
into proper context in subsequent versions.
\end{abstract}

\maketitle

\section{Introduction}


Tree-child phylogenetic networks form a well-studied class of rooted
phylogenetic networks. 
One of the standard encodings of tree-child networks is the
\(\mu\)-representation introduced by Cardona et al.~\cite{cardona2008comparison}.
For each vertex \(v\), the vector \(\mu(v)\) records the number of directed paths
from \(v\) to each labelled leaf. It is known that the \(\mu\)-representation
determines tree-child networks up to isomorphism. However, this does not solve the
inverse problem of deciding which finite sets of vectors actually arise as
\(\mu\)-representations of tree-child networks.

The aim of this paper is to solve this feasibility or encoding problem for tree-child
networks. We first characterize tree-child networks by
ordered tree-path decompositions. Such a decomposition expresses the network as a
collection of directed paths ending at the leaves, together with additional
 arcs from vertices of one path to the maximal vertex of a
later path.

We then translate this structure into vectorial conditions on a
finite set \(M\subseteq\mathbb N^n\). This leads to the notion of
tree-child \(\mu\)-compatibility. Our main result states that \(M\) is the
 \(\mu\)-representation of a tree-child phylogenetic network if and only if \(M\) is tree-child \(\mu\)-compatible (see Theorem~\ref{thm:tc-mu-characterization}). The characterization
is constructive: the conditions define a reconstruction digraph, and we
prove that this digraph realizes \(M\).

The rest of this paper is organized as follows. Section~2 recalls the basic definitions.
Section~3 proves the characterization of tree-child networks by ordered
tree-path decompositions. Section~4 gives the vectorial characterization in terms
of \(\mu\)-compatible sets and proves the reconstruction theorem.
The last section describes some future directions that we
plan to follow up in subsequent versions of this paper.

\section{Preliminaries}

\subsection{Basic definitions}

For \(n\geq 1\), let \([n]:=\{1,2,\ldots,n\}\). Throughout the paper, we use \(\mathbb N=\{0,1,2,\ldots\}\). A \emph{phylogenetic network} (or \emph{network}) \(N=(V,E)\) on \([n]\) is a finite rooted
directed acyclic graph with a unique root \(\rho\), satisfying \(\indeg(\rho)=0\),
such that every vertex is reachable from \(\rho\), and whose leaves are bijectively
labelled by the elements of \([n]\). We denote by \(l_i\) the leaf labelled by \(i\).
Throughout the paper, isomorphisms of phylogenetic networks are understood to preserve leaf labels.

A vertex \(v\neq \rho\) is a \emph{tree vertex} if \(\indeg(v)=1\), and a
\emph{reticulation vertex} if \(\indeg(v)>1\). A vertex is a \emph{leaf} if
\(\outdeg(v)=0\), and an \emph{inner vertex} otherwise.

We will denote by \(L(\N)\) the set of leaf nodes and by $R(\N)$ the set of reticulations of \(\N\). 
We do not require phylogenetic networks to be \emph{binary}. In the binary case,
reticulation vertices have in-degree two and out-degree one, and inner tree vertices
have out-degree two. Here we allow tree vertices to have arbitrary out-degree, and
reticulation vertices to have arbitrary in-degree greater than one and arbitrary
out-degree. In particular, reticulation vertices may also be leaves.

Depending on the topological conditions imposed on the underlying directed graph,
one obtains different classes of phylogenetic networks \cite{kong2022classes}. In this
paper we focus on \emph{tree-child networks}, namely phylogenetic networks in
which every inner vertex has at least one tree vertex as a child \cite{cardona2008comparison}.

\subsection{Paths}

A \emph{path}, or \emph{directed path}, in a network \(\N=(V,E)\) is a non-empty
sequence of vertices
$(v_1,v_2,\ldots,v_k)$
such that \((v_i,v_{i+1})\in E\) for all \(1\le i\le k-1\). A path is called
\emph{trivial} if it consists of a single vertex.
A non-trivial path \((v_1,v_2,\ldots,v_k)\) is called \emph{elementary} if
$\outdeg(v_i)=\indeg(v_{i+1})=1$, for all $1\le i\le k-1$. We write
$u\rightsquigarrow v$
to denote either a path from \(u\) to \(v\), or simply the existence of such a
path. The path may be trivial, so \(u\rightsquigarrow u\) always holds.

 Since $N$ is acyclic, we can  define the following partial order on $V$. For
\(u,v\in V\), we write
$
u\ge v
$
if there exists a (possibly trivial) path from \(u\) to \(v\). We write
\(u>v\) if \(u\ge v\) and \(u\ne v\). Thus, \(u>v\) means that \(u\) is a strict
ancestor of \(v\).

It is common in the study of phylogenetic networks to exclude elementary paths
\cite{kong2022classes}. Unless otherwise stated, we also impose this restriction
throughout this paper. Since every non-trivial elementary path contains an arc
\((u,v)\) satisfying
$\outdeg(u)=1$ 
and 
$\indeg(v)=1$,
this is equivalent to requiring that no such arc exists in \(\N\).
Note that by this definition, an edge connecting a reticulation node with out-degree 1 and a tree node with in-degree 1 is also considered an elementary path, even though it is common not to consider this particular case as an elementary path. We consider this an elementary path since both the reticulation and its child will have associated the same \(\mu\)-vector. In fact, in the particular case of tree-child networks, any node with out-degree 1 will be included in an elementary path, since its descendant will always be a tree node with in-degree 1. In order to address this issue, we are compressing all these edges into a single vertex, allowing reticulation nodes to have any in-degree greater than 1 and any out-degree other than 1 (including 0 in the case they are also leaf nodes).

A path $P=(u_1,u_2,\ldots,u_k)$
is a \emph{tree-path} if every edge of \(P\) ends in a tree vertex; equivalently,
\(u_2,\ldots,u_k\) are tree vertices. Notice that the first vertex \(u_1\) may be the
root, a tree vertex, or a reticulation vertex. This differs slightly from other
definitions in the literature \cite{cardona2008comparison}, where all vertices in a
tree-path are required to be tree vertices.

The absence of elementary paths has a simple consequence for tree-child networks.
If \(r\) is an inner reticulation 
vertex in a tree-child network without elementary
paths, then \(\outdeg(r)\ge 2\). Indeed, if \(\outdeg(r)=1\), then the unique child of
\(r\) would have to be a tree vertex by the tree-child property, and the corresponding
arc would be elementary.

Given a phylogenetic network \(\N\), we order tree-paths by directed subpath inclusion $\subseteq$. More precisely, a tree-path
\(P\) is contained in a tree-path \(P'\), in symbols $P \subseteq P'$, if \(P\) occurs as a directed subpath of
\(P'\). A tree-path \(P\) is \emph{maximal} if it is not properly contained in any
other tree-path of \(\N\).

The following lemma shows that tree-paths ending at the same vertex are linearly
ordered by directed subpath inclusion. 

\begin{lem}\label{thm:tree-path-contains} 
Let \(P:u\rightsquigarrow v\) and \(Q:w\rightsquigarrow v\) be two tree-paths of a
phylogenetic network \(\N\) ending at the same vertex \(v\). Then either \(P\) is
contained in \(Q\), or \(Q\) is contained in \(P\).
\end{lem}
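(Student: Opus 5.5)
The plan is to walk backwards from the common endpoint \(v\) and use the fact that, inside a tree-path, every vertex other than the first has in-degree one. Its predecessor is therefore forced. Write \(P=(u_1,\ldots,u_k)\) with \(u_1=u\), \(u_k=v\), and \(Q=(w_1,\ldots,w_m)\) with \(w_1=w\), \(w_m=v\). Without loss of generality assume \(k\le m\). We will show by induction on \(j=0,1,\ldots,k-1\) that \(u_{k-j}=w_{m-j}\). Then \(P=(w_{m-k+1},\ldots,w_m)\) is a directed subpath of \(Q\), so \(P\subseteq Q\). If instead \(m\le k\), the symmetric argument gives \(Q\subseteq P\).

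The base case \(j=0\) is just \(u_k=v=w_m\). For the inductive step, suppose \(u_{k-j}=w_{m-j}=:x\) with \(j\le k-2\). Since \(k-j\ge 2\), the vertex \(x\) is not the first vertex of \(P\). By the definition of a tree-path, \(x\) is therefore a tree vertex, so \(\indeg(x)=1\). Both \((u_{k-j-1},x)\) and \((w_{m-j-1},x)\) are arcs of \(\N\); the second exists because \(m-j\ge k-j\ge 2\). As \(x\) has exactly one parent, \(u_{k-j-1}=w_{m-j-1}\), which completes the induction.

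The main point to handle carefully is the boundary of the induction. We must only use in-degree one at vertices that are non-initial in both paths. This is guaranteed by choosing the shorter path and stopping once its first vertex is reached; that first vertex may be the root or a reticulation, and we never use its in-degree. Acyclicity is not needed, since paths are given as sequences and the identification is index-by-index. The argument is otherwise routine.
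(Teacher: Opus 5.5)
Your proof is correct and follows the same route as the paper's. Both walk backwards from \(v\), use that every non-initial vertex of a tree-path has a unique parent, and stop when the shorter path reaches its first vertex. Your explicit induction on \(j\), with the assumption \(k\le m\) and the check that \(x\) is non-initial in both paths, makes precise the paper's informal ``repeating this argument backwards''.
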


\begin{proof}
Write
$P=(u_1,u_2,\ldots,u_p)$, and $Q=(w_1,w_2,\ldots,w_q)$,
where \(u_1=u\), \(w_1=w\), and \(u_p=w_q=v\). Since both \(P\) and \(Q\) are
tree-paths, every non-initial vertex of each path is a tree vertex. In particular,
each such vertex has a unique parent.

Starting from the common endpoint \(v\), we compare the two paths backwards. If
both previous vertices exist, then they are both parents of the same tree vertex,
and therefore they coincide. Repeating this argument backwards, the two paths
coincide until one of them reaches its first vertex. Hence one of the two paths is
a directed subpath of the other.
\end{proof}

As a direct consequence of Lemma \ref{thm:tree-path-contains}, we have the following result.

\begin{cor}\label{thm:tree-path-unicity} 
Let \(\N\) be a phylogenetic network and let
\(P:u\rightsquigarrow v\)
be a tree-path. Then \(P\) is the unique directed path from \(u\) to \(v\).
\end{cor}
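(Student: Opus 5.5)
Let \(Q:u\rightsquigarrow v\) be an arbitrary directed path from \(u\) to \(v\); the plan is to show that \(Q\) is itself a tree-path and then apply Lemma~\ref{thm:tree-path-contains} to \(P\) and \(Q\). The argument goes backwards from \(v\), in the same spirit as the proof of that lemma, and uses acyclicity of \(\N\) at the end.

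Write \(P=(u_1,\ldots,u_p)\) with \(u_1=u\), \(u_p=v\), and \(Q=(w_1,\ldots,w_q)\) with \(w_1=u\), \(w_q=v\). If \(p=1\), then \(u=v\), and acyclicity forces \(Q\) to be trivial, so \(Q=P\). Otherwise we compare the two paths from the end. We have \(w_q=u_p\). Suppose \(w_{q-j}=u_{p-j}\) for some \(j\) with \(p-j\ge 2\) and \(q-j\ge 2\). Then \(u_{p-j}\) is a non-initial vertex of the tree-path \(P\), so it is a tree vertex and has a unique parent. Hence \(w_{q-j-1}=u_{p-j-1}\). We continue until one of the indices reaches \(1\).

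There are two cases. If the index in \(P\) reaches \(1\) first, or both reach \(1\) together, then some vertex \(w_k\) of \(Q\) equals \(u_1=u=w_1\). Acyclicity forces \(k=1\), so \(Q=P\). If instead the index in \(Q\) reaches \(1\) first, then \(u=w_1=u_i\) for some \(i\ge 2\), so \(P\) contains a non-trivial path from \(u\) back to \(u\). This is a directed cycle, which contradicts acyclicity. So in every case \(Q=P\), and \(P\) is the unique directed path from \(u\) to \(v\).

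The only delicate point is the second case: we must make sure the backward matching cannot stop early with \(Q\) strictly contained in \(P\). This is exactly where acyclicity of \(\N\) is used, and it is the reason the claim needs more than Lemma~\ref{thm:tree-path-contains} applied directly, since that lemma would require \(Q\) to be a tree-path in advance.
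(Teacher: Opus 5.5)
Your argument is correct and is essentially the paper's: the paper just cites Lemma~\ref{thm:tree-path-contains}, whose proof is this same backward comparison, and you correctly note that the comparison only needs the current common vertex to be a non-initial vertex of \(P\) (hence a tree vertex), so \(Q\) need not be a tree-path in advance, with acyclicity settling both terminal cases. One small inconsistency: your opening sentence announces that you will show \(Q\) is a tree-path and then apply the lemma, but you do neither, since the backward matching already gives \(Q=P\) directly.
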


\subsection{The $\mu$-representation\cite{cardona2008comparison}}


Let \(\N\) be a phylogenetic network on \([n]\). For every vertex \(v \in V(\N)\) we define the \emph{path multiplicity} of \(v\) to the leaf \(l_i\) labelled by \(i \in [n]\) as the number of paths \(m_i(v)\) from \(v\) to \(l_i\). The \emph{path-multiplicity vector}, or \(\mu\)-\emph{vector}, of \(v\) is defined as
\[\mu(v) := (m_1(v), m_2(v), \ldots, m_n(v)) \in \mathbb{N}^n.\]
In particular, for a leaf \(l_i\), we have
\[\mu(l_i) = \delta_i = (\underbrace{0, \ldots, 0, \operatornamewithlimits{1}^i, 0, \ldots, 0}_n),\]
with the non-zero entry at index \(i\).

The \(\mu\)-\emph{representation} of \(\N\) is the multiset
\[\mu(\N) = \{\mu(v): v \in V(\N)\}.\]

\begin{lem}[Lemma 4, \cite{cardona2008comparison}]\label{lem:children_mu}
    Let $N$ be a phylogenetic network and $v$ an inner vertex. Let $\{v_1,\ldots, v_k\}$ be the set of children of $v$. Then $\mu(v)=\mu(v_1)+\cdots + \mu(v_k)$.
\end{lem}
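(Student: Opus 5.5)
The plan is to prove the identity coordinatewise: fix a leaf label \(i\in[n]\) and show that \(m_i(v)=\sum_{j=1}^k m_i(v_j)\). The argument partitions the set of directed paths from \(v\) to \(l_i\) according to their second vertex.

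First I would check that every path counted by \(m_i(v)\) is non-trivial. Since \(v\) is an inner vertex, \(\outdeg(v)\ge 1\), so \(v\) is not a leaf and in particular \(v\neq l_i\). Hence the trivial path \((v)\) does not end at \(l_i\). Every path \(v\rightsquigarrow l_i\) therefore has the form \((v,w_2,\ldots,w_m)\) with \(m\ge 2\), and \((v,w_2)\in E\), so \(w_2=v_j\) for exactly one child \(v_j\).

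Next I would set up the bijection. Let \(\mathcal P\) denote the set of paths from \(v\) to \(l_i\), and \(\mathcal P_j\) the set of paths from \(v_j\) to \(l_i\). The map \(\bigsqcup_j \mathcal P_j\to\mathcal P\), sending \((v_j,\ldots,l_i)\mapsto(v,v_j,\ldots,l_i)\), is well defined because \((v,v_j)\) is an arc. The inverse map deletes the first vertex. The union on the left is disjoint, since the children \(v_j\) are distinct and paths in different \(\mathcal P_j\) start at different vertices. The two maps are mutually inverse, so \(|\mathcal P|=\sum_j|\mathcal P_j|\), which gives \(m_i(v)=\sum_j m_i(v_j)\).

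Two points need care. The first is that the children are taken as a set: if multiple arcs were allowed, a child would have to be counted with multiplicity. The networks here are digraphs with arc set \(E\subseteq V\times V\), so this does not arise. The second is that acyclicity is not needed for the counting itself; it only guarantees that the path counts are finite.
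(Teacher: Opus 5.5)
Your argument is correct: classifying the paths from \(v\) to \(l_i\) by their first arc \((v,v_j)\) and deleting \(v\) gives the required bijection, and you rightly exclude the trivial path because \(v\) is not a leaf. The paper gives no proof of its own and cites Cardona et al.\ for this lemma; your decomposition of paths by the child they pass through is the standard argument behind it.
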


As a consequence of Lemma 5 in \cite{cardona2008comparison}, we have the following result. 


\begin{prop}\label{prop:mu_is_set}
    If $N$ is a tree-child network without elementary paths, then distinct vertices have distinct $\mu$-vectors. Consequently, its $\mu$-representation can be regarded as a set.
\end{prop}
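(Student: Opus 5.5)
The plan is to argue directly instead of only citing Lemma 5 of \cite{cardona2008comparison}, since the conventions here differ: reticulations are compressed, and may have out-degree other than one. The key tool is a \emph{witness leaf}. In a tree-child network every vertex \(v\) has a path \(v=t_0\to t_1\to\cdots\to t_k=l_i\) in which \(t_1,\ldots,t_k\) are tree vertices. To get it, start at \(v\) and repeatedly move to a tree-vertex child, which exists by the tree-child property; acyclicity ensures this ends at a leaf. This is a tree-path in the paper's sense, so by Corollary~\ref{thm:tree-path-unicity} it is the unique path from \(v\) to \(l_i\). Hence \(m_i(v)=1\), and every vertex with \(m_i(\cdot)\ge 1\) is either an ancestor of \(v\) or lies on this path.

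Now suppose \(u\ne v\) with \(\mu(u)=\mu(v)\), and take a witness leaf \(l_i\) for \(v\). Since \(m_i(u)=1\ge 1\), \(u\) reaches \(l_i\). I will first show that \(u\) and \(v\) are comparable.

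\begin{itemize}
\item Suppose \(u\) lies on the tree-path from \(v\). Then \(v>u\).
\item Suppose \(u\) does not lie on it. Let \(t_j\) be the first vertex of the path that the path from \(u\) to \(l_i\) meets. If \(j\ge 1\), then \(t_j\) is a tree vertex, so its unique parent \(t_{j-1}\) also lies on the \(u\)-path. This contradicts minimality, so \(j=0\) and \(u>v\).
\end{itemize}

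By symmetry we may assume \(u>v\).

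Next I use Lemma~\ref{lem:children_mu}. If \(u>v\), then \(\mu(u)\ge\mu(w)\ge\mu(v)\) componentwise for any child \(w\) of \(u\) on a path toward \(v\). Equality \(\mu(u)=\mu(v)\) forces two things. First, \(\mu(u)=\mu(w)+\sum_{w'\neq w}\mu(w')\), so every other child \(w'\) has \(\mu(w')=0\). Second, \(\mu(w)=\mu(v)\). A vertex cannot have zero \(\mu\)-vector, since every vertex reaches some leaf. So \(u\) has a unique child \(w\), i.e.\ \(\outdeg(u)=1\).

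In a tree-child network that child \(w\) must be a tree vertex, so \(\indeg(w)=1\). Then the arc \((u,w)\) is elementary, contradicting the standing assumption. This argument covers \(u\) being the root, a tree vertex or a reticulation. Leaves are excluded because a leaf has no strict descendant, so \(u\) is inner.

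The main obstacle is the comparability step: showing that equal \(\mu\)-vectors force \(u\) and \(v\) to be comparable. This is exactly where the tree-child property, through the witness tree-path and its uniqueness from Corollary~\ref{thm:tree-path-unicity}, is indispensable. The remaining steps are the monotonicity from Lemma~\ref{lem:children_mu} and the local out-degree argument, which are routine. Since \(\mu\) is injective on \(V(\N)\), the multiset \(\mu(\N)\) has no repeated elements and may be regarded as a set.
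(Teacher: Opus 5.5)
Your proof is correct, but it takes a different route from the paper, which gives no argument of its own. The paper obtains the statement as a consequence of Lemma~5 of Cardona et al. That lemma says that in a tree-child network two distinct vertices can share a \(\mu\)-vector only when one is a reticulation whose unique child is the other. Since the paper counts such an arc as elementary and contracts it, no coincidences remain.

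You instead reprove that fact inside the paper's own conventions, in three steps:
\begin{enumerate}
\item The witness tree-path, together with Corollary~\ref{thm:tree-path-unicity}, forces \(u\) and \(v\) to be comparable. This is the same first-meeting-vertex argument the paper later uses in Corollary~\ref{thm:tp:col:v-to-l-then-v-to-rl}.
\item Lemma~\ref{lem:children_mu}, plus the fact that no \(\mu\)-vector vanishes, forces \(\outdeg(u)=1\).
\item The tree-child property then yields a forbidden arc \((u,w)\) with \(\indeg(w)=1\).
\end{enumerate}

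The citation route is shorter, but it leaves implicit that Cardona et al.'s lemma still holds in the modified setting here: non-binary vertices, reticulations of arbitrary out-degree, reticulation leaves, and the broader notion of elementary arc. Your argument checks this directly. It also shows exactly where each hypothesis is used: tree-child only for comparability, and the absence of elementary arcs only in the final step. That final step works the same way whether \(u\) is the root, a tree vertex, or a reticulation.
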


Cardona et al. prove in \cite{cardona2008comparison} that \(\mu\)-representations can be used to encode tree-child networks up to isomorphism:
\begin{thm}\label{thm:mu:cardona-isomorphism}
	Let \(\N, \N'\) be tree-child phylogenetic networks on \([n]\). Then, \(\N\) and \(\N'\) are isomorphic if, and only if, \(\mu(\N) = \mu(\N')\).
\end{thm}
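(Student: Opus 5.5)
The forward implication is immediate: an isomorphism \(\phi:\N\to\N'\) preserves leaf labels and bijects directed paths from \(v\) to \(l_i\) onto directed paths from \(\phi(v)\) to \(l_i\). Hence \(\mu(\phi(v))=\mu(v)\) for every vertex \(v\), and so \(\mu(\N)=\mu(\N')\). All the work is in the converse, which I plan to prove constructively. Assuming no elementary paths, Proposition~\ref{prop:mu_is_set} says \(\mu\) is injective on each network, so \(\mu(\N)\) is a genuine set \(M\). I will reconstruct the arc set of \(\N\) from \(M\) alone, using only the vectors. The map \(\phi:=\mu'^{-1}\circ\mu\) is then a bijection \(V(\N)\to V(\N')\) preserving labels. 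If the arcs are determined by \(M\) in the same way in both networks, \(\phi\) is an isomorphism.

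The key step is to characterize the arcs by vector data. First, leaves are exactly the vectors \(\delta_i\): any inner vertex has at least two leaf-descendants or is a reticulation-free chain, and the latter is excluded by the absence of elementary paths. The order is also readable from the vectors: \(u\ge v\) implies \(\mu(u)\ge\mu(v)\) componentwise, by Lemma~\ref{lem:children_mu} and induction. For the converse direction in tree-child networks, I would use that every vertex \(v\) has a tree-path to some leaf \(l_i\) (follow tree children). By Corollary~\ref{thm:tree-path-unicity}, this gives \(m_i(v)=1\), and \(l_i\) is a descendant of \(v\) only along that path. So if \(\mu(u)\ge\mu(v)\), then \(u\) reaches \(l_i\). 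Tracing back the unique tree-path from \(l_i\) up to \(v\) and beyond, every ancestor of \(l_i\) either lies above \(v\) on that path or reaches it by entering at a reticulation above \(v\). Either way, I expect to conclude \(u\ge v\), or at least a contradiction with the sum rule. The candidate criterion is then that \((u,v)\) is an arc iff \(v\) is a maximal element strictly below \(u\) in the componentwise order and \(\mu(u)-\mu(v)\) is a sum of vectors of other such maximal elements. More robustly, the children of \(u\) should be exactly the maximal elements of \(\{w\in M:\mu(w)<\mu(u)\}\) in the derived partial order, provided this set of maximal elements sums to \(\mu(u)\) as in Lemma~\ref{lem:children_mu}.

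The main obstacle is proving that the children of \(u\) coincide with the maximal elements below \(u\). A child is obviously maximal below \(u\). Conversely, a maximal descendant \(w\) must be a child: otherwise a path \(u\to c\rightsquigarrow w\) with \(c\neq w\) gives \(\mu(c)>\mu(w)\) strictly, since \(c\) has another child or \(w\) is a reticulation. Strictness needs the no-elementary-path hypothesis, since \(\mu(c)=\mu(w)\) would force an out-degree-one chain. The real subtlety is the equivalence between the vector order and reachability established above. That is where the tree-child hypothesis enters, and I would attack it first via the unique tree-path witness leaf \(l_i\) with \(m_i(v)=1\). Once this is done, arcs are determined by \(M\), and \(\phi\) is an isomorphism.
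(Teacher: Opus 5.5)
The paper gives no proof of this theorem; it cites Cardona et al. Judged on its own, your proposal has a genuine gap in the arc-reconstruction rule.

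Your forward direction is fine, and so is reducing the converse to showing that the arc set is determined by \(M\) (with \(\phi=\mu'^{-1}\circ\mu\)). Your order equivalence \(\mu(u)\ge\mu(v)\iff u\ge v\) is also true, and your witness-leaf idea proves it. With \(l_i\) reached from \(v\) by a tree-path, \(v\in T(l_i)\) and \(u\rightsquigarrow l_i\). So either \(u>r(l_i)\ge v\), or \(u\in T(l_i)\). The case \(u\) strictly below \(v\) on \(T(l_i)\) is excluded by strict monotonicity of \(\mu\). You left that last case vague, but it closes.

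The gap is the arc criterion. Your claim that ``a child is obviously maximal below \(u\)'' is false. A tree-child network may contain an arc \((u,r)\) into a reticulation \(r\) that is also reachable through another child of \(u\). The network of Figure~\ref{fig:phylo-example} is a counterexample. There \(\rho\) has children \(a\) and \(r_1\), and \((a,r_1)\) is also an arc, so \(\mu(r_1)=(0,1,1,1)<\mu(a)=(1,2,1,1)<\mu(\rho)=(1,3,2,2)\). In fact \(\mu(a)\) is the only maximal element of \(\{w\in M: w<\mu(\rho)\}\), and it does not sum to \(\mu(\rho)\). So both of your criteria miss the arc \((\rho,r_1)\), and your proviso leaves you with no rule at all. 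Only tree children are guaranteed to be maximal below their parent; reticulation children are exactly where a purely order-theoretic rule breaks down.

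What you need is a way to read these reticulation children off the vectors, and Section~\ref{sec:tc-mu-representations} of the paper supplies one. By Lemma~\ref{lem:mu-is-tree-child-order} and Propositions~\ref{prop:ordering-is-decomposition} and~\ref{prop:mu-partition-from-ordered-decomposition}, the chains \(P_i(M)\) are the \(\mu\)-images of the paths of an ordered tree-path decomposition of \(\N\). Hence the internal arcs are \((\mu_i^j,\mu_i^{j-1})\). Every other arc leaving \(\mu_i^j\) ends at a chain maximum \(\rho_k\) with \(i\prec_M k\). By Lemma~\ref{lem:children_mu}, the set \(B\) of these indices \(k\) satisfies \(\mu_i^j-\mu_i^{j-1}=\sum_{k\in B}\rho_k\).

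This \(B\) is forced by \(M\). Suppose \(B\neq B'\) are subsets of \(\{k: i\prec_M k\}\) with equal sums, and let \(k_0\) be the \(\prec_M\)-least index in their symmetric difference. Look at coordinate \(k_0\):
\begin{itemize}
\item every \(\rho_k\) with \(k_0\prec_M k\) vanishes there, since \(\rho_k\in P_k(M)\);
\item the \(\rho_k\) with \(k\prec_M k_0\) are common to \(B\) and \(B'\);
\item \(\rho_{k_0}^{(k_0)}>0\).
\end{itemize}
This contradicts the equal sums. Hence the whole arc set is determined by \(M\), and \(\phi\) is an isomorphism. This argument, like yours, uses the no-elementary-paths assumption, which the paper adopts as a standing hypothesis.
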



 Theorem \ref{thm:mu:cardona-isomorphism} is valid under the hypotheses of this paper. This means that it is valid on the set of non-binary tree-child networks without elementary paths and with arbitrary in-degree reticulation nodes.

\begin{figure}
	\centering
	\begin{tikzpicture}[scale=.75]
		\filldraw [] (0, 0) circle (2pt);
		\node at (0, 0) [circle, above] (R) {\(\rho\)};
		\filldraw [] (-.75, -1) circle (2pt);
		\node at (-.75, -1) [circle, left] (A) {\(a\)};
		\filldraw [] (-1.5, -2) circle (2pt);
		\node at (-1.5, -2) [circle, left] (B) {\(b\)};
		\filldraw [] (-2.25, -3) circle (2pt);
		\node at (-2.25, -3) [below] (L1) {\(l_1\)};
		\draw [] (0, 0) -- (-2.25, -3);
		
		\node at (1, -1.5) [draw] (R1) {\(r_1\)};		\draw [] (0, 0) -- (R1) -- (-.75, -1);
		
		\node at (-.25, -3) [draw] (R2) {\(r_2\)};
		\draw [] (-1.5, -2) -- (R2) -- (R1);
		
		\node at (R2) [draw=none, below=7pt] () {\(l_2\)};
		
		\filldraw (2, -3) circle (2pt);
		\node at (2, -3) [circle, right] (C) {\(c\)};
		\filldraw (2.75, -4) circle (2pt);
		\node at (2.75, -4) [below] () {\(l_4\)};
		\filldraw (1.25, -4) circle (2pt);
		\node at (1.25, -4) [below] () {\(l_3\)};
		
		\draw [] (R1) -- (2, -3) -- (1.25, -4);
		\draw [] (2, -3) -- (2.74, -4);
	\end{tikzpicture}
	\caption{A phylogenetic tree-child network.}
	\label{fig:phylo-example}
\end{figure}

\begin{exm} 
Consider the tree-child network \(\N\) on $[4]$ represented in Figure~\ref{fig:phylo-example}, where \(l_i\) denotes the leaf labelled by \(i\). Then the \(\mu\)-vectors of each node are the following:

\begin{center}
	\begin{tabular}{|c|c|}
		\hline
		Node & \(\mu\)-vector \\ \hline
		\(\rho\) & \((1, 3, 2, 2)\) \\
		\(a\) & \((1, 2, 1, 1)\) \\
		\(b\) & \((1, 1, 0, 0)\) \\
		\(c\) & \((0, 0, 1, 1)\) \\
		\(r_1\) & \((0, 1, 1, 1)\) \\
		\hline
	\end{tabular}
	\hspace{1cm}
	\begin{tabular}{|c|c|}
		\hline
		Node & \(\mu\)-vector \\ \hline
		\(l_1\) & \((1, 0, 0, 0)\) \\
		\(l_2\)/\(r_2\) & \((0, 1, 0, 0)\) \\
		\(l_3\) & \((0, 0, 1, 0)\) \\
		\(l_4\) & \((0, 0, 0, 1)\) \\
		\hline
	\end{tabular}
\end{center}

Note that node \(l_2=r_2\) is both a leaf node and a reticulation. Then, the \(\mu\)-representation of \(\N\) can be identified with the set
\begin{equation*}
	\mu(\N) = \{\delta_1, \delta_2, \delta_3, \delta_4, (1, 1, 0, 0), (0, 0, 1, 1), (0, 1, 1, 1), (1, 2, 1, 1), (1, 3, 2, 2)\}.
\end{equation*}
\end{exm}

\section{Tree-paths on tree-child networks}

 If we focus on the space of tree-child networks, it is possible to determine when a tree-path is maximal.

\begin{lem}\label{thm:tree-path-maximal-condition}
A tree-path of a tree-child network \(\N\) is maximal if and only if it starts either
at the root or at a reticulation vertex, and it ends at a leaf.
\end{lem}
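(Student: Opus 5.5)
We prove the two directions separately, using only the definition of a tree-path (every non-initial vertex is a tree vertex) and the tree-child property.

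\emph{Maximal implies the endpoint conditions.} Let \(P=(u_1,\ldots,u_k)\) be a maximal tree-path; we argue by contraposition on each end.

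First suppose \(u_1\) is neither the root nor a reticulation vertex. Then \(u_1\) is a tree vertex, so it has a unique parent \(w\). The sequence \((w,u_1,\ldots,u_k)\) is again a path. Its non-initial vertices \(u_1,\ldots,u_k\) are all tree vertices, so it is a tree-path. It properly contains \(P\), contradicting maximality.

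Next suppose \(u_k\) is not a leaf. Then \(u_k\) is an inner vertex, and by the tree-child property it has a child \(t\) that is a tree vertex. The sequence \((u_1,\ldots,u_k,t)\) is a path, since \(t\neq u_i\) by acyclicity. It is a tree-path properly containing \(P\), again a contradiction.

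\emph{The endpoint conditions imply maximal.} Let \(P=(u_1,\ldots,u_k)\) start at the root or at a reticulation vertex and end at a leaf. Suppose \(P\) is properly contained in a tree-path \(P'\). Then \(P'\) either extends beyond \(u_k\) or begins before \(u_1\).

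If \(P'\) extends beyond \(u_k\), then \(u_k\) has a successor in \(P'\). This is impossible, because \(u_k\) is a leaf and so \(\outdeg(u_k)=0\).

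If \(P'\) begins before \(u_1\), then \(u_1\) is a non-initial vertex of \(P'\). Hence \(u_1\) must be a tree vertex with in-degree \(1\). But the root has in-degree \(0\), and a reticulation has in-degree greater than \(1\). In either case \(u_1\) is not a tree vertex, which is a contradiction.

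In both cases the containment cannot be proper, so \(P\) is maximal.

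One point needs care: what it means to extend a path "on one side". Since \(P\) occurs as a contiguous directed subpath of \(P'\), and \(P'\neq P\), the path \(P'\) must contain a vertex immediately before \(u_1\) or immediately after \(u_k\). This justifies the two-case split above.

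The only step that genuinely uses the tree-child hypothesis is extending a path past a non-leaf endpoint. In a general network, an inner vertex all of whose children are reticulations would end a maximal tree-path without being a leaf. Everything else is routine.
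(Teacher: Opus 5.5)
Your proof is correct and follows essentially the same route as the paper. Both arguments extend backwards via the unique parent of a tree vertex and forwards via a tree child for the first direction; for the converse, both rule out extension past a leaf and note that extension before \(u_1\) would force \(u_1\) to be a tree vertex.
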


\begin{proof}
Let $P=(u_1,u_2,\ldots,u_k)$
be a tree-path of \(\N\).
Assume first that \(P\) is maximal. Suppose that \(u_1\) is neither the root nor a
reticulation vertex. Then \(u_1\) is a tree vertex, and hence \(\indeg(u_1)=1\).
Let \(u_0\) be its unique parent. Since the arc \((u_0,u_1)\) ends in a tree vertex,
the path $P'=(u_0,u_1,\ldots,u_k)$
is again a tree-path, properly containing \(P\). This contradicts the maximality of
\(P\). Therefore, \(u_1\) is either the root or a reticulation vertex.

Now suppose that \(u_k\) is not a leaf. Since \(\N\) is tree-child, \(u_k\) has at
least one child \(u_{k+1}\) that is a tree vertex. Hence
$P'=(u_1,\ldots,u_k,u_{k+1})$
is a tree-path properly containing \(P\), again contradicting the maximality of
\(P\). Thus \(u_k\) must be a leaf.

Conversely, assume that \(P\) starts either at the root or at a reticulation vertex,
and that it ends at a leaf. Suppose, for contradiction, that \(P\) is not maximal.
Then there exists a tree-path \(P'\) properly containing \(P\) as a directed subpath.
Since \(u_k\) is a leaf, \(P'\) cannot extend \(P\) after \(u_k\). Therefore \(P'\)
must extend \(P\) before \(u_1\). In particular, there is a vertex \(u_0\notin P\)
such that \((u_0,u_1)\) is an arc of \(P'\).
But \(P'\) is a tree-path, so every arc of \(P'\) ends in a tree vertex. Hence
\(u_1\) must be a tree vertex. This is impossible if \(u_1\) is a reticulation
vertex. It is also impossible if \(u_1=\rho\), since the root has no parent.
Therefore no such \(P'\) exists, and \(P\) is maximal.
\end{proof}

\begin{cor}\label{cor:unique-maximal-tree-path-leaf}
Let \(\N\) be a tree-child network. Then each leaf \(l \in L(\N)\) is contained in a
unique maximal tree-path. 
\end{cor}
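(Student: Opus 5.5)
To prove Corollary~\ref{cor:unique-maximal-tree-path-leaf} we show existence and uniqueness separately, relying on Lemma~\ref{thm:tree-path-contains} and Lemma~\ref{thm:tree-path-maximal-condition}.

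\emph{Existence.} Fix a leaf \(l\in L(\N)\) and build a tree-path ending at \(l\) by walking backwards. Start with the trivial path \((l)\), which is a tree-path since it has no arcs. As long as the current first vertex \(u_1\) is a tree vertex, it has a unique parent \(u_0\). Prepending \(u_0\) gives a new path whose added arc \((u_0,u_1)\) ends in a tree vertex, so the result is again a tree-path. Since \(\N\) is finite and acyclic, this process must stop. It stops exactly when the first vertex is the root or a reticulation vertex. Call the resulting path \(P_l\). It starts at the root or at a reticulation vertex and ends at the leaf \(l\), so Lemma~\ref{thm:tree-path-maximal-condition} shows that \(P_l\) is maximal.

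If \(l\) is itself a reticulation (as with \(l_2=r_2\) in Figure~\ref{fig:phylo-example}), the process stops immediately. Then \(P_l=(l)\) is the trivial path, and it is still maximal by the same lemma.

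\emph{Uniqueness.} Let \(Q\) be any maximal tree-path containing \(l\). Because \(l\) has out-degree zero, \(l\) must be the last vertex of \(Q\). So \(Q\) and \(P_l\) are both tree-paths ending at \(l\), and Lemma~\ref{thm:tree-path-contains} says one is contained in the other. Both are maximal, so neither can be properly contained in the other, and therefore \(Q=P_l\).

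The only point needing care is this last one: a path containing \(l\) must end at \(l\). That holds because \(l\) has no outgoing arcs, so \(l\) cannot be followed by any vertex on a path. With that observation, the rest of the argument is direct.
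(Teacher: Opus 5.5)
Your proof is correct, and your uniqueness argument is exactly the paper's: any maximal tree-path containing $l$ must end at $l$, so Lemma~\ref{thm:tree-path-contains} together with maximality forces the two paths to coincide. For existence, the paper only observes that the trivial tree-path $(l)$ lies in some maximal tree-path because $\N$ is finite. You instead build that path explicitly by climbing through the unique parents of tree vertices, and certify its maximality with Lemma~\ref{thm:tree-path-maximal-condition}; this is a little longer, but it also shows directly that the maximal path starts at the root or at a reticulation.
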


\begin{proof}
First, there exists at least one maximal tree-path containing \(l\). Indeed, the
trivial path \((l)\) is a tree-path, and since \(\N\) is finite, it is contained in
some maximal tree-path.

We now prove uniqueness. Let \(T\) and \(T'\) be two maximal tree-paths containing
\(l\). Since \(l\) is a leaf, both \(T\) and \(T'\) end at \(l\). Therefore, by
Lemma~\ref{thm:tree-path-contains}, either \(T\) is contained in \(T'\), or \(T'\)
is contained in \(T\). Since both tree-paths are maximal, the containment must be
equality. Thus \(T=T'\).
\end{proof}

Let \(\N\) be a tree-child network. By Corollary~\ref{cor:unique-maximal-tree-path-leaf}, the following notation is
well-defined. For each leaf \(l\in L(\N)\), we denote by $T(l)=(u_1,u_2,\ldots,u_m=l)$
the unique maximal tree-path ending at \(l\). When no confusion is possible, we
identify \(T(l)\) with its set of vertices.

We now consider the map $r:L(\N)\longrightarrow R(\N)\cup\{\rho\}$, that sends a leaf $l$ to the first node in $T(l)$. By Lemma~\ref{thm:tree-path-maximal-condition} and Corollary~\ref{cor:unique-maximal-tree-path-leaf} the map $r$ is well-defined. That is, $r(l)$ lies in \(R(\N)\cup\{\rho\}\) and $T(l)$ is unique; then its first node too.
\begin{prop}
   Let \(\N\) be a tree-child network.  The map \(r\) is surjective.
\end{prop}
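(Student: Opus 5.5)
The plan is to take an arbitrary element \(x\in R(\N)\cup\{\rho\}\) and construct a leaf \(l\) with \(r(l)=x\). Starting from \(x\), we build a tree-path greedily downward. Set \(u_1=x\). As long as the current vertex \(u_j\) is not a leaf, the tree-child property guarantees that \(u_j\) has at least one child that is a tree vertex. We choose such a child as \(u_{j+1}\). Since \(\N\) is finite and acyclic, this process stops after finitely many steps at a vertex \(u_m\) with \(\outdeg(u_m)=0\), that is, at a leaf \(l:=u_m\). By construction every arc \((u_j,u_{j+1})\) ends in a tree vertex, so \(P=(u_1,\ldots,u_m)\) is a tree-path.

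One degenerate case needs separate treatment: \(x\) may itself be a leaf. This happens when \(x\) is a reticulation that is also a leaf, or when \(\N\) consists only of the root, which is then the leaf. In that case \(P=(x)\) is the trivial path, and \(l=x\).

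Next we identify \(P\) with \(T(l)\). The path \(P\) starts at the root or at a reticulation vertex and ends at a leaf, so by Lemma~\ref{thm:tree-path-maximal-condition} it is a maximal tree-path. It contains \(l\), and Corollary~\ref{cor:unique-maximal-tree-path-leaf} says the maximal tree-path containing \(l\) is unique. Hence \(P=T(l)\), and therefore \(r(l)=u_1=x\).

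I do not expect a real obstacle here. The only point needing care is the edge case above, which is consistent with Lemma~\ref{thm:tree-path-maximal-condition} because trivial paths are tree-paths. One further check concerns the root: \(\rho\) is not a tree vertex, so even if \(\rho\) has in-degree 0 and out-degree 1, it is a legitimate starting point. Tree-childness guarantees the first step to a tree-vertex child, exactly as for any other inner vertex.
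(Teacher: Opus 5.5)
Your proposal is correct and follows essentially the same argument as the paper: if \(x\) is a leaf you use the trivial path, and otherwise you descend greedily through tree children to a leaf \(l\) and invoke Lemma~\ref{thm:tree-path-maximal-condition} to get a maximal tree-path starting at \(x\), so \(r(l)=x\). Your explicit appeal to Corollary~\ref{cor:unique-maximal-tree-path-leaf} to identify this path with \(T(l)\) only spells out a step the paper leaves implicit.
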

\begin{proof}
Let \(u\in R(\N)\cup\{\rho\}\). If \(u\) is a
leaf, then the trivial path \((u)\) is a maximal tree-path and \(u=r(u)\). If \(u\)
is not a leaf, then, since \(\N\) is tree-child, we can start at \(u\) and repeatedly
choose a tree child until we reach a leaf \(l\). This gives a tree-path from \(u\)
to \(l\). By Lemma~\ref{thm:tree-path-maximal-condition}, this tree-path is maximal,
and therefore \(u=r(l)\).
\end{proof}

\begin{cor}\label{thm:tp:col:v-to-l-then-v-to-rl}
Let \(\N\) be a tree-child network. For each leaf \(l\in L(\N)\) and each vertex
\(v\in V(\N)\), there exists a path from \(v\) to \(l\) if and only if either
\(v\in T(l)\) or \(v > r(l)\). 
\end{cor}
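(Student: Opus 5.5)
We prove the two implications separately, writing \(T(l)=(u_1,\ldots,u_m=l)\) with \(u_1=r(l)\).

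\emph{Sufficiency.} If \(v\in T(l)\), the final segment of \(T(l)\) starting at \(v\) is a path \(v\rightsquigarrow l\). If \(v>r(l)\), concatenate a path \(v\rightsquigarrow r(l)\) with \(T(l)\) to get a path \(v\rightsquigarrow l\). This direction is immediate.

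\emph{Necessity.} Let \(P=(v=w_1,\ldots,w_k=l)\) be a path from \(v\) to \(l\). We walk backwards along \(P\) from \(l\) and compare it with \(T(l)\), as in the proof of Lemma~\ref{thm:tree-path-contains}.

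Let \(j\) be the smallest index such that \(w_j,\ldots,w_k\) are all tree vertices, except possibly \(w_j\) itself. Then \((w_j,\ldots,w_k)\) is a tree-path ending at \(l\). By Lemma~\ref{thm:tree-path-contains} it is comparable with \(T(l)\) under inclusion. Since \(T(l)\) is maximal, \((w_j,\ldots,w_k)\) is a final segment of \(T(l)\), so \(w_j\in T(l)\). There are two cases.

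\begin{itemize}
\item If \(j=1\), then \(v=w_1\in T(l)\) and we are done.
\item If \(j>1\), then by the choice of \(j\) the vertex \(w_j\) is not a tree vertex. Since it has the parent \(w_{j-1}\), it is not the root either, so \(w_j\) is a reticulation. It lies in \(T(l)\), and every vertex of \(T(l)\) after the first is a tree vertex. Hence \(w_j=u_1=r(l)\). Therefore \(v=w_1\) has a path to \(r(l)\) of positive length \(j-1\). By acyclicity, \(v\neq r(l)\), so \(v>r(l)\).
\end{itemize}

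The only delicate step is justifying that \(w_j\) coincides with \(u_1\) rather than with some later vertex of \(T(l)\). This is exactly where the tree-path definition (all non-initial vertices are tree vertices) is used. It also needs Lemma~\ref{thm:tree-path-maximal-condition}, which guarantees that \(T(l)\) begins at the root or at a reticulation, so that the backward walk cannot run past \(u_1\) while staying among tree vertices.

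Two degenerate cases should also be checked.

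\begin{itemize}
\item If \(l\) is itself a reticulation, then \(T(l)=(l)\) and \(r(l)=l\). Then \(j=k\), and the argument above gives either \(v=l\) or \(v>l=r(l)\).
\item If \(v=\rho\), then \(\rho\) lies in \(T(l)\) precisely when \(r(l)=\rho\). Otherwise \(\rho>r(l)\) holds automatically, since every vertex is reachable from the root.
\end{itemize}
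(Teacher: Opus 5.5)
Your proof is correct. It reaches the paper's conclusion with slightly different bookkeeping.

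\textbf{Paper's route.} The paper walks forward along $P$ and takes the first vertex $x_q$ of $P$ lying in $T(l)$. It shows $x_q=r(l)$ by a direct unique-parent contradiction: otherwise $x_q$ would be a tree vertex with two distinct parents, namely $x_{q-1}\notin T(l)$ and its predecessor in $T(l)$.

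\textbf{Your route.} You walk backward from $l$ and take the longest suffix of $P$ that is a tree-path. Lemma~\ref{thm:tree-path-contains}, together with maximality of $T(l)$, places this suffix as a final segment of $T(l)$. When $j>1$ its initial vertex is a reticulation, so it can only be $r(l)$.

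\textbf{Comparison.} The proof of Lemma~\ref{thm:tree-path-contains} is itself the backward unique-parent argument, so both proofs rest on the same mechanism. Yours reuses an established lemma and shows explicitly that $P$ ends with all of $T(l)$ whenever $v\notin T(l)$. The paper's version is self-contained and never uses maximality of $T(l)$. The same argument therefore works for any tree-path ending at $l$, with its first vertex in place of $r(l)$.

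\textbf{Two small remarks.} First, your appeal to Lemma~\ref{thm:tree-path-maximal-condition} in the ``delicate step'' is unnecessary. Maximality of $T(l)$ enters only to exclude $T(l)\subsetneq(w_j,\ldots,w_k)$. The identification $w_j=u_1$ uses only that non-initial vertices of a tree-path are tree vertices. Second, your two degenerate cases are already covered by the general argument.
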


\begin{proof}
If \(v\in T(l)\), then there is a path from \(v\) to \(l\) along \(T(l)\). If
\(v > r(l)\), then concatenating a path from \(v\) to \(r(l)\) with the tree-path
\(T(l)\) gives a path from \(v\) to \(l\).

Conversely, suppose that there exists a path
$
P=(v=x_0,x_1,\ldots,x_s=l)
$
from \(v\) to \(l\). If \(v\in T(l)\), there is nothing to prove. Assume therefore
that \(v\notin T(l)\). 
Let \(x_q\) be the first vertex of \(P\) that belongs to \(T(l)\). Such a vertex
exists because \(l\in T(l)\). Since \(v\notin T(l)\), we have \(q>0\). We claim that
\(x_q=r(l)\). Indeed, if \(x_q\ne r(l)\), then \(x_q\) is a non-initial vertex of
the tree-path \(T(l)\), and hence \(x_q\) is a tree vertex. Therefore \(x_q\) has a
unique parent. But \(x_{q-1}\) is a parent of \(x_q\), and the predecessor of
\(x_q\) in \(T(l)\) is another parent of \(x_q\) belonging to \(T(l)\), contradicting
the choice of \(x_q\) as the first vertex of \(P\) in \(T(l)\). Hence \(x_q=r(l)\),
and so \(v > r(l)\).
\end{proof}

Once the map $r$  has been defined, the vertices of the maximal tree-paths can be described purely in terms of reachability and the relative position of the vertices \(r(l)\). The following result should therefore be understood as a description of \(T(l)\) relative to the map \(r\).

\begin{prop}\label{thm:tp:prop:max-tree-path-conditions-accessibility}
Let \(\N\) be a tree-child network. For each leaf \(l\in L(\N)\), the maximal
tree-path \(T(l)\), identified with its set of vertices, is given by
\[
T(l)
=
\{v\in V(\N):
v\rightsquigarrow l
\text{ and }
v\not\rightsquigarrow l'
\text{ for every } l'\in L(\N) \text{ such that } r(l')>r(l)\}.
\]
\end{prop}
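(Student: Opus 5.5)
We prove the two inclusions separately, relying on Corollary~\ref{thm:tp:col:v-to-l-then-v-to-rl} (reachability of a leaf $l'$ from $v$ is equivalent to $v\in T(l')$ or $v>r(l')$) and on the uniqueness of parents of tree vertices. Write $S(l)$ for the set on the right-hand side.

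\emph{Inclusion $T(l)\subseteq S(l)$.} Let $v\in T(l)$. Then $v\rightsquigarrow l$ along $T(l)$. Suppose, for contradiction, that $v\rightsquigarrow l'$ for some leaf $l'$ with $r(l')>r(l)$. By Corollary~\ref{thm:tp:col:v-to-l-then-v-to-rl}, either $v\in T(l')$ or $v>r(l')$. Since $v\in T(l)$, we have $r(l)\ge v$.

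In the second case, $r(l)\ge v>r(l')>r(l)$, which contradicts acyclicity.

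In the first case, $v\in T(l')$ gives $r(l')\ge v$. The path $r(l')\rightsquigarrow v$ along $T(l')$ is a tree-path. Its initial segment from $r(l')$ down to $r(l)$ is not needed; the useful observation concerns $v$ itself. If $v=r(l)$, then $r(l)$ is a non-initial vertex of $T(l')$, because $r(l')\neq r(l)$. Hence $r(l)$ would be a tree vertex, whereas it is the root or a reticulation; this is a contradiction. If $v\neq r(l)$, then $v$ is a tree vertex lying in both $T(l)$ and $T(l')$. Walking backwards via unique parents, exactly as in Lemma~\ref{thm:tree-path-contains}, the two paths agree above $v$ until one of them reaches its start. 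Since $r(l')>r(l)$, the path $T(l')$ extends strictly above $r(l)$, so $r(l)$ is a non-initial vertex of $T(l')$ and is therefore a tree vertex. This is again a contradiction.

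\emph{Inclusion $S(l)\subseteq T(l)$.} Let $v\in S(l)$ and suppose $v\notin T(l)$. By Corollary~\ref{thm:tp:col:v-to-l-then-v-to-rl}, $v>r(l)$, so $r(l)\neq\rho$ and $r(l)$ is a reticulation. Take a path from $v$ to $r(l)$ and let $p$ be its vertex immediately before $r(l)$. Then $p\ge r(l)$ strictly, so $p\notin T(l)$. We want a leaf $l'$ reachable from $v$ with $r(l')>r(l)$.

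Consider $r(p)$ defined through tree paths: go up from $p$ along unique parents while the current vertex is a tree vertex. This ends at a vertex $u\in R(\N)\cup\{\rho\}$ with $u\ge p>r(l)$. Now descend from $p$ by repeatedly choosing tree children (the tree-child property) until reaching a leaf $l'$. The concatenation $u\rightsquigarrow p\rightsquigarrow l'$ is a tree-path from a root/reticulation to a leaf, so by Lemma~\ref{thm:tree-path-maximal-condition} it equals $T(l')$ and $r(l')=u>r(l)$. Since $v\ge p\ge l'$, we get $v\rightsquigarrow l'$, contradicting $v\in S(l)$.

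The main obstacle is this second inclusion: producing a witness leaf $l'$ with $r(l')$ strictly above $r(l)$. The key choice that makes it work is to use the parent $p$ of $r(l)$ on a path from $v$, together with the tree-child descent from $p$. One must still check that $u\ge p$ holds when $p$ itself is a reticulation or the root; in that case $u=p$, which is fine.
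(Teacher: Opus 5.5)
Your proof is correct and follows essentially the same route as the paper. The first inclusion uses the reachability corollary together with the backward unique-parent comparison of $T(l)$ and $T(l')$; your separate case $v=r(l)$ is already covered by that comparison. The second inclusion descends along tree children to a leaf $l'$ whose maximal tree-path starts above a vertex strictly above $r(l)$. The one difference is that the paper descends directly from $v$ itself, which gives $r(l')\ge v>r(l)$ immediately. Your detour through the parent $p$ of $r(l)$ is harmless but is not really the ``key choice'': $v$ works just as well.
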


\begin{proof}
Let
$
A_l=
\{v\in V(\N):
v\rightsquigarrow l
\text{ and }
v\not\rightsquigarrow l'
\text{ for every } l'\in L(\N) \text{ such that } r(l')>r(l)\}$.

We prove that \(T(l)=A_l\). First, let \(v\in T(l)\). Then clearly \(v\rightsquigarrow l\). Suppose, for a
contradiction, that there exists a leaf \(l'\in L(\N)\) such that
$
r(l')>r(l)
$
and
$
v\rightsquigarrow l'$.
By Corollary~\ref{thm:tp:col:v-to-l-then-v-to-rl}, applied to \(l'\), either
\(v\in T(l')\) or \(v>r(l')\). 
The second possibility cannot occur. Indeed, since \(v\in T(l)\), we have
\(r(l)\ge v\). If \(v>r(l')\), then
$r(l)\ge v>r(l')>r(l)$,
which would give a directed cycle.
Hence \(v\in T(l')\). Consider the subpaths of \(T(l)\) and \(T(l')\) ending at
\(v\):
$r(l)\rightsquigarrow v$ and $r(l')\rightsquigarrow v$.
Both are tree-paths. By Lemma~\ref{thm:tree-path-contains}, one is contained in the
other. Since \(r(l')>r(l)\), the subpath from \(r(l)\) to \(v\) is contained in the
subpath from \(r(l')\) to \(v\). Thus \(r(l)\) is a non-initial vertex of \(T(l')\).
But \(r(l)\) is a reticulation vertex, since \(r(l')>r(l)\) implies \(r(l)\ne\rho\).
This contradicts the fact that all non-initial vertices of a tree-path are tree
vertices. Therefore \(v\in A_l\).

Conversely, let \(v\in A_l\). Since \(v\rightsquigarrow l\), Corollary~\ref{thm:tp:col:v-to-l-then-v-to-rl}
implies that either \(v\in T(l)\) or \(v>r(l)\). If \(v\in T(l)\), there is
nothing to prove. Assume, for a contradiction, that \(v\notin T(l)\).
Since \(\N\) is tree-child, starting from \(v\) and repeatedly choosing a tree child,
we obtain a tree-path from \(v\) to some leaf \(l'\). Hence \(v\rightsquigarrow l'\).
Moreover, the maximal tree-path \(T(l')\) containing this tree-path starts at some
vertex \(r(l')\) satisfying $r(l')\ge v$.
Therefore
$r(l')\ge v>r(l)$,
and so \(r(l')>r(l)\). This contradicts the definition of \(A_l\), because
\(v\rightsquigarrow l'\). Hence \(v\in T(l)\).
\end{proof}

\begin{prop}\label{prop:cover_Tl}
Let \(\N\) be a tree-child network with leaf set
$L(\N)=\{l_1,\ldots,l_n\}$.
Then
$\mathcal T_\N=\{T(l_i):i=1,\ldots,n\}
$
is a cover of \(V(\N)\).
\end{prop}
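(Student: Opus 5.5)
Since each \(T(l_i)\) is by definition a set of vertices of \(\N\), the union \(\bigcup_{i=1}^n T(l_i)\) is automatically contained in \(V(\N)\). So the plan is to prove the reverse inclusion: every vertex \(v\in V(\N)\) lies in \(T(l)\) for some leaf \(l\). The idea is the one already used in the proof of the surjectivity of \(r\) and in the second half of Proposition~\ref{thm:tp:prop:max-tree-path-conditions-accessibility}. We extend from \(v\) downwards by tree children until we hit a leaf, and then appeal to the uniqueness of the maximal tree-path ending at that leaf.

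Concretely, fix \(v\in V(\N)\). If \(v\) is a leaf, take \(l=v\); the trivial path \((v)\) is contained in \(T(v)\), so \(v\in T(v)\). Otherwise \(v\) is an inner vertex, and since \(\N\) is tree-child it has a tree child \(v_1\). If \(v_1\) is not a leaf, it again has a tree child \(v_2\), and so on. Because \(\N\) is finite and acyclic, this process terminates at a leaf \(l=v_k\). The resulting path \(P=(v,v_1,\ldots,v_k)\) is a tree-path, since every arc ends in a tree vertex.

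Next we place \(P\) inside \(T(l)\). Since \(\N\) is finite, \(P\) is contained in some maximal tree-path \(T\). This \(T\) contains the leaf \(l\), so by Corollary~\ref{cor:unique-maximal-tree-path-leaf} we get \(T=T(l)\). (Alternatively, Lemma~\ref{thm:tree-path-contains} applied to \(P\) and \(T(l)\), both ending at \(l\), shows that one contains the other; maximality of \(T(l)\) then forces \(P\subseteq T(l)\).) Therefore \(v\in P\subseteq T(l)\), which proves the cover property.

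There is no serious obstacle in this argument. The only points needing care are that the descent by tree children terminates, which follows from acyclicity and finiteness, and that the tree-path \(P\), which starts at an arbitrary vertex \(v\), is genuinely a subpath of \(T(l)\). The latter is exactly what Lemma~\ref{thm:tree-path-contains} or Corollary~\ref{cor:unique-maximal-tree-path-leaf} guarantees.
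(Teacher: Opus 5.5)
Your proposal is correct and follows essentially the same route as the paper: descend from $v$ by tree children to a leaf $l$, then use Corollary~\ref{cor:unique-maximal-tree-path-leaf} to place the resulting tree-path inside $T(l)$. You also spell out why the tree-path from $v$ lies inside $T(l)$ (via a maximal extension, or via Lemma~\ref{thm:tree-path-contains}), which the paper leaves implicit.
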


\begin{proof}
Let \(v\in V(\N)\). If \(v\) is a leaf, then \(v\in T(v)\).
Assume now that \(v\) is not a leaf. Since \(\N\) is tree-child, we can start at
\(v\) and repeatedly choose a tree child until we reach a leaf \(l\), and it yields a tree-path from \(v\) to
\(l\).
By Corollary~\ref{cor:unique-maximal-tree-path-leaf}, there is a unique maximal
tree-path ending at \(l\), namely \(T(l)\). Therefore the tree-path from \(v\) to
\(l\) is contained in \(T(l)\), and hence
$v\in T(l)$.
Thus every vertex of \(\N\) belongs to at least one member of
\(\mathcal T_\N\), and so \(\mathcal T_\N\) is a cover of \(V(\N)\).
\end{proof}

See Example \ref{exm:ordered-tree-path-example} for a decomposition of a tree-child network into maximal tree-paths. In particular, we can observe in this example that $T(l_i)$ and $T(l_j)$ have vertices in common although $l_i \neq l_j$; in other terms, the set $\mathcal T_\N=\{T(l_i):i=1,\ldots,n\}$ is a cover but not a partition of $V(N)$.

We now turn the cover by maximal tree-paths into a partition. Let \(\prec\) be a
total order on \([n]\). For every \(i\in [n]\), define
\[
P_\prec(l_i)
=
\{v \in V(\N): v \rightsquigarrow l_i \text{ and } v \not \rightsquigarrow l_j \text{ for every } j \prec i\}.
\]
Thus, each vertex \(v\) is assigned to the first label \(i\), with respect to
\(\prec\), such that \(l_i\) is a descendant of \(v\).

\begin{defn}\label{def:tree-path-order}
    Let \(\N\) be a tree-child network. A total ordering \(\prec\) of \([n]\) is a \emph{tree-path order} if every pair of indices \(i, j \in [n]\) such that \(r(l_i) > r(l_j)\) satisfies \(i \prec j\).
\end{defn}

\begin{prop}\label{prop:tree-path-order-partition}
    Let \(\N\) be a tree-child network, and let \(\prec\) be a tree-path order on \([n]\). Then, for every \(i \in [n]\), we have that
    \[P_\prec(l_i) = T(l_i) \setminus \bigcup_{j \prec i} T(l_j).\]
\end{prop}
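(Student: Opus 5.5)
We prove the two inclusions separately. The tools are Corollary~\ref{thm:tp:col:v-to-l-then-v-to-rl}, Proposition~\ref{thm:tp:prop:max-tree-path-conditions-accessibility}, and the defining property of a tree-path order: if \(r(l_k)>r(l_i)\), then \(k\prec i\).

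\emph{Inclusion \(\supseteq\).} Let \(v\in T(l_i)\) with \(v\notin T(l_j)\) for every \(j\prec i\). Clearly \(v\rightsquigarrow l_i\). Suppose that \(v\rightsquigarrow l_j\) for some \(j\prec i\). Since \(v\notin T(l_j)\), Corollary~\ref{thm:tp:col:v-to-l-then-v-to-rl} gives \(v>r(l_j)\). On the other hand, \(v\in T(l_i)\) gives \(r(l_i)\ge v\), so \(r(l_i)>r(l_j)\). The tree-path order condition then forces \(i\prec j\), which contradicts \(j\prec i\). Hence \(v\in P_\prec(l_i)\).

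\emph{Inclusion \(\subseteq\).} Let \(v\in P_\prec(l_i)\). First, \(v\notin T(l_j)\) for \(j\prec i\), because \(v\in T(l_j)\) would give \(v\rightsquigarrow l_j\). The main step is to show \(v\in T(l_i)\). We use the description of \(T(l_i)\) in Proposition~\ref{thm:tp:prop:max-tree-path-conditions-accessibility}. We already have \(v\rightsquigarrow l_i\). Now take any leaf \(l_k\) with \(r(l_k)>r(l_i)\). Since \(\prec\) is a tree-path order, \(k\prec i\), and so \(v\not\rightsquigarrow l_k\) by the definition of \(P_\prec(l_i)\). This is exactly the membership condition of Proposition~\ref{thm:tp:prop:max-tree-path-conditions-accessibility}, so \(v\in T(l_i)\).

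This direction is where the tree-path order hypothesis is used in its essential form. It converts the reachability condition over ``earlier labels'' into the condition over leaves \(l_k\) with \(r(l_k)>r(l_i)\) required by Proposition~\ref{thm:tp:prop:max-tree-path-conditions-accessibility}. The rest of the argument is routine bookkeeping using acyclicity.
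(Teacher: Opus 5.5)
Your proof is correct and follows essentially the same route as the paper. The inclusion \(\subseteq\) uses the tree-path order to reduce to the reachability description of \(T(l_i)\) in Proposition~\ref{thm:tp:prop:max-tree-path-conditions-accessibility}. The inclusion \(\supseteq\) uses Corollary~\ref{thm:tp:col:v-to-l-then-v-to-rl} to get \(r(l_i)\ge v>r(l_j)\), which forces \(i\prec j\); you only present the two inclusions in the opposite order.
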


\begin{proof}
    We will first prove that for every \(i \in [n]\), \(P_\prec(l_i) \subseteq T(l_i)\). Then, we will prove that, for any vertex \(v\), if \(v \in T(l_i) \cap T(l_j)\) with \(j \prec i\), then \(v \notin P_\prec(l_i)\).

    Let \(v\in P_\prec(l_i)\). Then \(v \rightsquigarrow l_i\), and \(v \not \rightsquigarrow l_j\) for every \(j \prec i\). Since \(\prec\) is a tree-path order, if \(r(l_j)>r(l_i)\), then \(j\prec i\). Therefore \(v \not \rightsquigarrow l_j\) for every \(l_j \in L(N)\) such that \(r(l_j) > r(l_i)\). By Proposition \ref{thm:tp:prop:max-tree-path-conditions-accessibility}, \(v \in T(l_i)\).

    For the second property, if \(v \in T(l_i) \cap T(l_j)\) with \(j \prec i\), then \(v \rightsquigarrow l_j\). Therefore, \(v \notin P_\prec(l_i)\).

    We have proved that \(P_\prec(l_i) \subseteq T(l_i) \setminus \bigcup_{j \prec i} T(l_j)\). For the other inclusion, let \(v \in T(l_i) \setminus \bigcup_{j \prec i} T(l_j)\). Since \(v \in T(l_i)\), we have that \(v \rightsquigarrow l_i\) and \(v \le r(l_i)\). Suppose that there exists an index \(j \prec i\) such that \(v \rightsquigarrow l_{j}\). We know that \(v \notin T(l_{j})\). By Corollary \ref{thm:tp:col:v-to-l-then-v-to-rl}, we have that \(r(l_{j}) < v \le r(l_i)\). Hence, it follows that \(i \prec j\), leading to a contradiction with the supposition that \(j \prec i\). Therefore, the other inclusion also holds.

    Finally, we have that \(P_\prec(l_i)\) corresponds to the set of vertices \(v\) such that \(v \in T(l_i)\) and \(v \notin T(l_j)\) for every \(j \prec i\).
\end{proof}

Proposition \ref{prop:tree-path-order-partition} states that, for tree-path orders, each vertex of the network is assigned to the first maximal tree-path, with respect to \(\prec\), that contains it. Equivalently, under the conditions of Proposition \ref{prop:tree-path-order-partition}, \(P_\prec(l_i)\) can be rewritten as \(P_\prec(l_i) = \{v \in T(l_i): v \notin T(l_j) \text{ for every } j \prec i\}\).

\begin{prop}\label{prop:partition_Pl}
Let \(\N\) be a tree-child network, and let \(\prec\) be a tree-path order on \([n]\).
Then
\[
\mathcal P_{\N,\prec}
=
\{P_\prec(l_i): i \in [n]\}
\]
is a partition of \(V(\N)\). Moreover,  \(P_\prec(l_i)\) is a directed
path ending at \(l_i\).
\end{prop}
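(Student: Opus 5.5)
Three things have to be shown: the sets \(P_\prec(l_i)\) cover \(V(\N)\), they are pairwise disjoint, and each one is a directed path ending at \(l_i\). All three will come from Proposition~\ref{prop:tree-path-order-partition}, which gives \(P_\prec(l_i)=T(l_i)\setminus\bigcup_{j\prec i}T(l_j)\), together with the cover property of Proposition~\ref{prop:cover_Tl}.

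\emph{Covering.} Let \(v\in V(\N)\). By Proposition~\ref{prop:cover_Tl}, \(v\) lies in some \(T(l_j)\), so the set \(\{i: v\in T(l_i)\}\) is non-empty. Let \(i\) be its \(\prec\)-minimum. Then \(v\in T(l_i)\) and \(v\notin T(l_j)\) for all \(j\prec i\), hence \(v\in P_\prec(l_i)\).

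\emph{Disjointness.} This follows directly from the definition. If \(v\in P_\prec(l_i)\cap P_\prec(l_k)\) with \(i\ne k\), assume without loss of generality that \(k\prec i\). Then \(v\rightsquigarrow l_k\), which contradicts the condition defining \(P_\prec(l_i)\).

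\emph{Non-emptiness.} Every \(P_\prec(l_i)\) contains \(l_i\): indeed \(l_i\rightsquigarrow l_i\), and \(l_i\) reaches no other leaf since it has out-degree \(0\). So each block is non-empty.

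\emph{Path structure.} Write \(T(l_i)=(u_1,\ldots,u_m=l_i)\). The key claim is that \(P_\prec(l_i)\) is a terminal segment \((u_s,\ldots,u_m)\) of this path, which is then a directed path ending at \(l_i\). To prove the claim, I show that \(T(l_i)\setminus\bigcup_{j\prec i}T(l_j)\) is closed downward along \(T(l_i)\). Suppose \(u_t\in P_\prec(l_i)\) and \(t<t'\). Since \(u_t\rightsquigarrow u_{t'}\), any leaf reachable from \(u_{t'}\) is also reachable from \(u_t\). So if \(u_{t'}\rightsquigarrow l_j\) for some \(j\prec i\), then \(u_t\rightsquigarrow l_j\), which is a contradiction. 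Hence \(u_{t'}\in P_\prec(l_i)\).

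It follows that the indices \(t\) with \(u_t\in P_\prec(l_i)\) form an up-closed, non-empty subset of \(\{1,\ldots,m\}\) (it contains \(m\)), i.e.\ an interval \(\{s,\ldots,m\}\). Consecutive vertices \(u_t,u_{t+1}\) are joined by arcs of \(T(l_i)\), so \(P_\prec(l_i)\) is a directed path ending at \(l_i\).

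The step needing most care is this last one: one must use the reachability form of \(P_\prec\), rather than the set-difference form, to get the monotonicity along \(T(l_i)\). Everything else is immediate from Proposition~\ref{prop:tree-path-order-partition}.
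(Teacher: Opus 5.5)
Your proof is correct and follows essentially the same route as the paper. Covering goes through the $\prec$-first maximal tree-path containing $v$, via Propositions~\ref{prop:cover_Tl} and~\ref{prop:tree-path-order-partition}, and the path structure comes from the same downward-closure argument along $T(l_i)$ using the reachability form of $P_\prec(l_i)$. Your direct reachability argument for disjointness and your explicit check that $l_i\in P_\prec(l_i)$ (so no block is empty) are small, harmless additions to what the paper writes.
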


\begin{proof}
By Proposition~\ref{prop:cover_Tl}, the collection
$\mathcal T_\N=\{T(l_i):l_i\in L(\N)\}$
is a cover of \(V(\N)\). Therefore every vertex of \(\N\) belongs to at least one
maximal tree-path. By Proposition \ref{prop:tree-path-order-partition}, since \(\prec\) is a total order on \([n]\), each vertex \(v\) is
assigned to the first index \(i\) with respect to \(\prec\), such that \(v \in T(l_i)\). Hence the sets
\(P_\prec(l_i)\) are pairwise disjoint and their union is \(V(\N)\). Thus
\(\mathcal P_{\N,\prec}\) is a partition of \(V(\N)\).

It remains to prove that each \(P_\prec(l_i)\) is a directed path ending at
\(l_i\). Since \(P_\prec(l_i)\subseteq T(l_i)\), let \(v\in P_\prec(l_i)\) and let
\(w\) be a vertex below \(v\) in \(T(l_i)\). Then \(w\rightsquigarrow l_i\). If
\(w\rightsquigarrow l_j\) for some \(j\prec i\), then
\(v\rightsquigarrow w\rightsquigarrow l_j\), contradicting
\(v\in P_\prec(l_i)\). Hence \(w\in P_\prec(l_i)\). Therefore
\(P_\prec(l_i)\) is a terminal subpath of \(T(l_i)\), and so it is a directed path
ending at \(l_i\).

\end{proof}

\begin{defn}\label{def:ordered-tree-path-decomposition}
Let \(\N\) be a phylogenetic network with leaf set
$L(\N)=\{l_1,\ldots,l_n\}$.
An \emph{ordered tree-path decomposition} of \(\N\) is a pair $(\prec,\mathcal P_\N)$,
where \(\prec\) is a total order on \([n]\) and
$\mathcal P_\N=\{P(l_i):i \in [n]\}$
is a partition of \(V(\N)\), satisfying the following conditions.

\begin{enumerate}
    \item For every \(i\in[n]\), \(P(l_i)\) can be ordered as a directed
    path ending at \(l_i\),
    \[
    P(l_i)=
    \bigl(
    v_i^{m_i},
    v_i^{m_i-1},
    \ldots,
    v_i^2,
    v_i^1=l_i
    \bigr).
    \]

    \item Every arc of \(\N\) is either an internal arc of one of these paths, $(v_i^j,v_i^{j-1})$,
    for $i\in[n]$, and $j=2,\ldots,m_i$,
    or an arc from a vertex of one path to the maximal vertex of a later path,    $(v_i^j,v_k^{m_k})$, with $i\prec k$.
\end{enumerate}
\end{defn}

See Example \ref{exm:ordered-tree-path-example} for ordered tree-path decompositions of a tree-child network.

\begin{prop}\label{prop:ordering-is-decomposition}
    Let \(\N\) be a tree-child network and let \(\prec\) be a tree-path order of \([n]\). Then,
    \((\prec, \mathcal{P}_{\N, \prec})\)
    is an ordered tree-path decomposition of \(\N\).
\end{prop}

\begin{proof}
    By Proposition \ref{prop:partition_Pl}, \(\mathcal{P}_{\N, \prec}\) is a partition of \(V(\N)\), and each \(P_\prec(l_i)\) is a directed path ending at \(l_i\). Hence condition \((1)\) of Definition \ref{def:ordered-tree-path-decomposition} is satisfied.

    Write
    $    P_\prec(l_i)=(v_i^{m_i},\ldots,v_i^1=l_i).$
    By Proposition \ref{prop:tree-path-order-partition}, \(P_\prec(l_i)\subseteq T(l_i)\), and therefore every vertex \(v_i^j\) with \(j<m_i\) is a tree vertex. Hence the only arc entering \(v_i^j\), for \(j<m_i\), is the internal arc \((v_i^{j+1},v_i^j)\). Therefore, every arc entering \(P_\prec(l_i)\) which is not an internal arc must end at its maximal vertex \(v_i^{m_i}\).

    Let \((u,v_i^{m_i})\) be such an arc. Since \(\N\) is acyclic, \(u\notin P_\prec(l_i)\). As \(\mathcal{P}_{\N, \prec}\) is a partition of \(V(\N)\), we have \(u\in P_\prec(l_k)\) for some \(k\ne i\). If \(i\prec k\), then \(u\rightsquigarrow l_i\), and the definition of \(P_\prec(l_k)\) would imply \(u\notin P_\prec(l_k)\), a contradiction. Hence \(k\prec i\). Thus every non-internal arc goes from a vertex of an earlier path to the maximal vertex of a later path, and condition \((2)\) of Definition \ref{def:ordered-tree-path-decomposition} is satisfied.

    Therefore, \((\prec, \mathcal{P}_{\N, \prec})\) is an ordered tree-path decomposition of \(\N\).
\end{proof}

\begin{thm}
\label{thm:ordered-tree-path-characterization}
A phylogenetic network \(\N\) is tree-child if and only if it admits an ordered
tree-path decomposition.
\end{thm}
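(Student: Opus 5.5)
For the forward direction I would combine Proposition~\ref{prop:ordering-is-decomposition} with the existence of a tree-path order. Given a tree-child network \(\N\), the relation on \([n]\) defined by \(i \lessdot j\) iff \(r(l_i) > r(l_j)\) is a strict partial order. It is irreflexive and transitive because \(>\) on \(V(\N)\) is a strict partial order, \(\N\) being acyclic. Any linear extension \(\prec\) of \(\lessdot\) is therefore a tree-path order in the sense of Definition~\ref{def:tree-path-order}. Proposition~\ref{prop:ordering-is-decomposition} then shows that \((\prec,\mathcal P_{\N,\prec})\) is an ordered tree-path decomposition, which settles this direction.

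For the converse, suppose \(\N\) admits an ordered tree-path decomposition \((\prec,\mathcal P_\N)\) with paths \(P(l_i)=(v_i^{m_i},\ldots,v_i^1=l_i)\). We must show that every inner vertex has a tree-vertex child.

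Let \(v\) be an inner vertex. Since the paths partition \(V(\N)\), we have \(v=v_i^j\) for a unique \(i\) and \(j\). The case \(j=1\) cannot occur: then \(v=l_i\), which is a leaf. So \(j\ge 2\), and \(v\) has the child \(w=v_i^{j-1}\) via an internal arc. I claim \(w\) is a tree vertex. Condition (2) says every arc of \(\N\) is either internal or ends at the maximal vertex of some path. Because \(j-1<m_i\), \(w\) is not the maximal vertex of \(P(l_i)\). Since the paths are disjoint, \(w\) is also not the maximal vertex of any other path. Hence the only arcs entering \(w\) are internal arcs of \(P(l_i)\) ending at \(w\), and the only such arc is \((v_i^j,v_i^{j-1})\). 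Thus \(\indeg(w)=1\), and \(w\ne\rho\) because \(\rho\) has in-degree \(0\). So \(w\) is a tree vertex and \(\N\) is tree-child.

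There is one point of care, which I expect to be the only real obstacle. One must check that a vertex with out-degree \(0\) can only be some \(v_i^1\). Conversely, each \(v_i^1=l_i\) is a leaf by hypothesis, since the \(P(l_i)\) end at the labelled leaves. The argument above only uses the second fact: if \(v\) is inner then \(v\neq l_i\), so \(j\ge 2\). I would also remark that the root, being inner, is handled by the same argument. Finally, the directedness of \(P(l_i)\) guarantees that \((v_i^j,v_i^{j-1})\) is indeed an arc, which is exactly what condition (1) provides.
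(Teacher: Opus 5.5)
Your proposal is correct and follows essentially the same route as the paper. Forward, you take a linear extension of the relation \(r(l_i)>r(l_j)\) and apply Proposition~\ref{prop:ordering-is-decomposition}; conversely, you observe that condition (2) leaves \((v_i^j,v_i^{j-1})\) as the only arc entering the non-maximal vertex \(v_i^{j-1}\), and your extra checks (that the relation is a strict partial order and that \(w\neq\rho\)) only make explicit what the paper leaves implicit.
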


\begin{proof}

Assume first that \(\N\) is tree-child. Since \(\N\) is acyclic, the relation on
\([n]\) given by \(r(l_i)>r(l_j)\) is acyclic and can be extended to a total order
\(\prec\) on \([n]\). By Definition~\ref{def:tree-path-order}, \(\prec\) is a tree-path
order. Hence, by Proposition~\ref{prop:ordering-is-decomposition},
\((\prec,\mathcal P_{\N,\prec})\) is an ordered tree-path decomposition of \(\N\).

Conversely, suppose that \(\N\) admits an ordered tree-path decomposition
$(\prec,\mathcal P_\N)$. 
Let \(u\) be an inner vertex of \(\N\). Since \(\mathcal P_\N\) is a partition of
\(V(\N)\), there exists a unique \(i\) such that \(u\in P(l_i)\). Write
$P(l_i)=
\bigl(
v_i^{m_i},
v_i^{m_i-1},
\ldots,
v_i^2,
v_i^1=l_i
\bigr)$.
Since \(u\) is not a leaf, we have
$u=v_i^j$
for some \(j>1\). Since \(P(l_i)\) is a directed path, the arc
$(v_i^j,v_i^{j-1})$
belongs to \(E(\N)\).

We claim that \(v_i^{j-1}\) is a tree vertex. By condition \(2\) in
Definition~\ref{def:ordered-tree-path-decomposition}, every arc entering a
non-maximal vertex of a path must be the corresponding internal arc of that path.
Since \(v_i^{j-1}\) is not maximal in \(P(l_i)\), the only arc entering it is
$(v_i^j,v_i^{j-1})$. Therefore
$\indeg(v_i^{j-1})=1$, and so \(v_i^{j-1}\) is a tree vertex.
Thus every inner vertex \(u\) has a tree vertex as a child. Hence \(\N\) is
tree-child.
\end{proof}

\begin{rmk}
Notice that an ordered tree-path decomposition contains enough information to recover the
network once the \emph{bridges} (the arcs from a node of one path to a maximal node of another path) are specified. 
\end{rmk}

\begin{rmk}
The previous characterization should not be understood as a tree-child test for a phylogenetic network: this can already be checked locally by
verifying that every inner vertex has at least one tree child. Instead, the theorem
provides a structural certificate for the tree-child property, decomposing the
network into directed paths and bridges. This is the feature that will be used
later to express the tree-child condition in terms of \(\mu\)-representations.
\end{rmk}

\begin{exm}\label{exm:ordered-tree-path-example}

Consider the tree-child phylogenetic network \(\N\) on \([3]\),
depicted in
Figure~\ref{fig:ordered-tree-path-example-original}. 

The maximal tree-paths of \(\N\) are
$T(l_2)=(\rho,a,b,l_2)$, $T(l_3)=(\rho,a,l_3)$, and $T(l_1)=(l_1)$.
Hence
$\mathcal T_\N=\{T(l_1),T(l_2),T(l_3)\}$
is a cover of \(V(\N)\), but not a partition, because the vertices \(\rho\) and
\(a\) belong to both \(T(l_2)\) and \(T(l_3)\).

We now consider the induced partitions for some tree-path orders on \(\{1, 2, 3\}\).
If we consider the order \(2\prec 3\prec 1\), then \(P_\prec(l_2)=(\rho,a,b,l_2)\),
\(P_\prec(l_3)=(l_3)\), and \(P_\prec(l_1)=(l_1)\); the bridges are
$(a,l_3)$, $(b,l_1)$, and $(\rho,l_1)$.
They all satisfy the condition in
Definition~\ref{def:ordered-tree-path-decomposition}: the arc \((a,l_3)\) goes from
a vertex of \(P_\prec(l_2)\) to the maximal vertex of the later path
\(P_\prec(l_3)\), while \((b,l_1)\) and \((\rho,l_1)\) go to the maximal vertex of
the later path \(P_\prec(l_1)\). This ordered tree-path decomposition is represented in
Figure~\ref{fig:ordered-tree-path-example-231}.

If we now consider the order \(3\prec 2\prec 1\), then \(P_\prec(l_3)=(\rho,a,l_3)\),
\(P_\prec(l_2)=(b,l_2)\), and \(P_\prec(l_1)=(l_1)\).
The bridges are now
$(a,b)$, $(b,l_1)$, and $(\rho,l_1)$.
Again, this also yields an ordered tree-path decomposition; see
Figure~\ref{fig:ordered-tree-path-example-321}.

Notice that these are the only two tree-path orders. Indeed,
\(r(l_2)=r(l_3)=\rho\) and \(r(l_1)=l_1\). Hence
\(r(l_2)>r(l_1)\) and \(r(l_3)>r(l_1)\), so Definition~\ref{def:tree-path-order}
requires both \(2\prec1\) and \(3\prec1\). Thus index \(1\) must be last, while
indices \(2\) and \(3\) can occur in either order. 

Finally, this example illustrates that, once the directed paths of the partition and
the corresponding bridges are known, the original network \(\N\) is recovered
uniquely by taking the internal arcs of the paths together with those bridges.
\end{exm}

\begin{figure}[ht]
\centering

\begin{subfigure}{.32\textwidth} 
\centering
\begin{tikzpicture}[scale=.65]
    \filldraw [] (0,0) circle (2pt);
    \node at (0,0) [above=3pt] {\(\rho\)};

    \filldraw [] (-1,-1) circle (2pt);
    \node at (-1,-1) [left=3pt] {\(a\)};

    \filldraw [] (-2,-2) circle (2pt);
    \node at (-2,-2) [left=3pt] {\(b\)};

    \node at (2,-4) [draw] (L1) {};
    \node at (2,-4) [below=3pt] {\(l_1\)};

    \filldraw [] (-2,-4) circle (2pt);
    \node at (-2,-4) [below=3pt] {\(l_2\)};

    \filldraw [] (0,-4) circle (2pt);
    \node at (0,-4) [below=3pt] {\(l_3\)};

    \draw [thick] (0,0) -- (-1,-1);
    \draw [thick] (0,0) -- (L1);
    \draw [thick] (-1,-1) -- (-2,-2);
    \draw [thick] (-1,-1) -- (0,-4);
    \draw [thick] (-2,-2) -- (L1);
    \draw [thick] (-2,-2) -- (-2,-4);
\end{tikzpicture}

\caption{The original network \(\N\).}
\label{fig:ordered-tree-path-example-original}
\end{subfigure} 
\hfill
\begin{subfigure}{.32\textwidth}
\centering
\begin{tikzpicture}[scale=.65]
    \filldraw [] (0,0) circle (2pt);
    \node at (0,0) [above=3pt] {\(\rho\)};

    \filldraw [] (-1,-1) circle (2pt);
    \node at (-1,-1) [left=3pt] {\(a\)};

    \filldraw [] (-2,-2) circle (2pt);
    \node at (-2,-2) [left=3pt] {\(b\)};

    \node at (2,-4) [draw] (L1) {};
    \node at (2,-4) [below=3pt] {\(l_1\)};

    \filldraw [] (-2,-4) circle (2pt);
    \node at (-2,-4) [below=3pt] {\(l_2\)};

    \filldraw [] (0,-4) circle (2pt);
    \node at (0,-4) [below=3pt] {\(l_3\)};

    \draw [thick] (0,0) -- (-1,-1) -- (-2,-2) -- (-2,-4);

    \draw [dashed] (-1,-1) -- (0,-4);
    \draw [dashed] (-2,-2) -- (L1);
    \draw [dashed] (0,0) -- (L1);
\end{tikzpicture}

\caption{Decomposition for \(2\prec 3\prec 1\).}
\label{fig:ordered-tree-path-example-231}
\end{subfigure}
\hfill
\begin{subfigure}{.32\textwidth}
\centering
\begin{tikzpicture}[scale=.65]
    \filldraw [] (0,0) circle (2pt);
    \node at (0,0) [above=3pt] {\(\rho\)};

    \filldraw [] (-1,-1) circle (2pt);
    \node at (-1,-1) [left=3pt] {\(a\)};

    \filldraw [] (-2,-2) circle (2pt);
    \node at (-2,-2) [left=3pt] {\(b\)};

    \node at (2,-4) [draw] (L1) {};
    \node at (2,-4) [below=3pt] {\(l_1\)};

    \filldraw [] (-2,-4) circle (2pt);
    \node at (-2,-4) [below=3pt] {\(l_2\)};

    \filldraw [] (0,-4) circle (2pt);
    \node at (0,-4) [below=3pt] {\(l_3\)};

    \draw [thick] (0,0) -- (-1,-1) -- (0,-4);
    \draw [thick] (-2,-2) -- (-2,-4);

    \draw [dashed] (-1,-1) -- (-2,-2);
    \draw [dashed] (-2,-2) -- (L1);
    \draw [dashed] (0,0) -- (L1);
\end{tikzpicture}

\caption{Decomposition for \(3\prec 2\prec 1\).}
\label{fig:ordered-tree-path-example-321}
\end{subfigure}

\caption{A tree-child network, panel (\textsc{a}), with a reticulation leaf, together with its two
ordered tree-path decompositions, panels (\textsc{b}) and (\textsc{c}). Solid arcs represent the internal
arcs of the paths in the partition, whereas dashed arcs are the bridges between
distinct paths.}
\label{fig:ordered-tree-path-example}
\end{figure}

\section{Tree-child \texorpdfstring{\(\mu\)}{mu}-representations}
\label{sec:tc-mu-representations}

In the previous section we characterized tree-child networks in terms of ordered
tree-path decompositions. We now translate this characterization into the language
of \(\mu\)-representations. Our goal is to characterize those finite sets of vectors
$M\subseteq \mathbb N^n$
which arise as the \(\mu\)-representation of a tree-child phylogenetic network on
\([n]\) without elementary paths.
Throughout this section, for networks without elementary paths, \(\mu(\N)\) will be identified with the corresponding set of vectors, as in Proposition~\ref{prop:mu_is_set}. 

 We use the coordinatewise
partial order on \(\mathbb N^n\): for \(\mu,\nu\in\mathbb N^n\), we write
$\mu\leq \nu$
if
$\mu^{(i)}\leq \nu^{(i)}$ for every 
$i\in\{1,\ldots,n\}$.
We write \(\mu<\nu\) if \(\mu\leq \nu\) and \(\mu\neq\nu\).

Let \(M\subseteq\mathbb N^n\) be a finite set with a maximum element for the
coordinatewise order. We denote this maximum element by $\mu_\rho$.
Thus
$\mu\leq \mu_\rho$ for  every $\mu\in M$.
The vector \(\mu_\rho\) induces a canonical total order on the coordinates. We define
\[
i\prec_M j \text{ if and only if } 
\mu_\rho^{(i)}<\mu_\rho^{(j)}
\text{ or }
\mu_\rho^{(i)}=\mu_\rho^{(j)}
\text{ and }
i<j.
\]
Thus coordinates are ordered by their value in the maximal vector, with the
natural order on \([n]\) used only to break ties.

Given a finite set \(M\subseteq\mathbb N^n\) with maximum element \(\mu_\rho\), we
use the canonical order \(\prec_M\) to partition \(M\) according to the first
non-zero coordinate of each vector.
For each \(i\in[n]\), define
\[
P_i(M)
=
\{
\mu\in M:
\mu^{(i)}>0
\text{ and }
\mu^{(h)}=0
\text{ for every }h\prec_M i
\}.
\]
Equivalently, \(P_i(M)\) consists of those vectors whose first non-zero coordinate,
with respect to the order \(\prec_M\), is the \(i\)-th coordinate.

\begin{lem}\label{lem:mu-is-tree-child-order}
Let \(\N\) be a tree-child network without elementary paths and let \(M := \mu(\N)\) be its \(\mu\)-representation. Then, \(\prec_M\) is a tree-path order on \([n]\).
\end{lem}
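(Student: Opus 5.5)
We need to show that if \(r(l_i) > r(l_j)\), then \(i \prec_M j\). Here \(\mu_\rho = \mu(\rho)\), whose \(i\)-th coordinate is the number of paths from \(\rho\) to \(l_i\). The natural strategy is to compare \(\mu_\rho^{(i)}\) with \(\mu_\rho^{(j)}\) and prove the strict inequality \(\mu_\rho^{(i)} < \mu_\rho^{(j)}\); strictness is needed so that the tie-breaking by the natural order on \([n]\) never enters.

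\textbf{Step 1: paths to \(l_i\) factor through \(r(l_i)\).} Every path from \(\rho\) to \(l_i\) passes through \(r(l_i)\), and from \(r(l_i)\) on it must follow \(T(l_i)\). This is essentially the argument of Corollary~\ref{thm:tp:col:v-to-l-then-v-to-rl}: the first vertex of the path lying in \(T(l_i)\) must be \(r(l_i)\). Together with Corollary~\ref{thm:tree-path-unicity}, this gives a bijection between paths \(\rho\rightsquigarrow l_i\) and paths \(\rho\rightsquigarrow r(l_i)\). Hence
\[
\mu_\rho^{(i)} = \#\{\text{paths } \rho\rightsquigarrow r(l_i)\} =: p(r(l_i)),
\]
with \(p(\rho)=1\). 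The same holds for \(j\).

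\textbf{Step 2: strict monotonicity of \(p\).} Suppose \(r(l_i) > r(l_j)\). Then \(r(l_j)\neq\rho\), so \(r(l_j)\) is a reticulation. Each path \(\rho\rightsquigarrow r(l_i)\) can be extended by a fixed path \(r(l_i)\rightsquigarrow r(l_j)\), and distinct prefixes give distinct paths. This yields an injection, so \(p(r(l_j)) \ge p(r(l_i))\).

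For strictness, we need a path to \(r(l_j)\) not of that form. Because \(r(l_j)\) is a reticulation, it has at least two parents. I would argue by the standard counting identity that \(p(v)=\sum_{u \text{ parent of } v} p(u)\), with every \(p(u)\ge 1\) since every vertex is reachable from \(\rho\). Pick a parent \(u_1\) of \(r(l_j)\) with \(r(l_i)\ge u_1\); such a parent exists because \(r(l_i)>r(l_j)\). Then, applying the injection argument to \(u_1\), we get \(p(r(l_j)) \ge p(u_1) + p(u_2) \ge p(r(l_i)) + 1\), where \(u_2\) is another parent. If \(u_1 = r(l_i)\) this is immediate; otherwise the inequality \(p(u_1)\ge p(r(l_i))\) comes from the same extension argument.

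Therefore \(\mu_\rho^{(i)} < \mu_\rho^{(j)}\), so \(i\prec_M j\), and \(\prec_M\) is a tree-path order in the sense of Definition~\ref{def:tree-path-order}.

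The expected main obstacle is strictness in Step 2: the injection alone only gives a non-strict inequality. It requires using the reticulation property of \(r(l_j)\), which holds because it is not the root, and the reachability of every vertex from \(\rho\). These facts supply a second parent contributing at least one extra path.
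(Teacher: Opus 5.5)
Your proof is correct and follows the paper's argument. You first identify $\mu_\rho^{(i)}$ with the number of paths from $\rho$ to $r(l_i)$ via Corollary~\ref{thm:tree-path-unicity}, and then obtain strictness from a second incoming arc of the reticulation $r(l_j)$; your parents $u_1,u_2$ play exactly the role of the paper's edges $e,e'$. The only step you leave implicit, which the paper checks in one line, is that $\mu(\rho)$ is the maximum of $M$ (every path from a vertex to a leaf extends backwards to $\rho$), which is what makes $\prec_M$ defined and governed by $\mu(\rho)$.
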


\begin{proof}
    We prove that if \(r(l_i)>r(l_j)\) for two indices \(i,j\in[n]\), then \(\mu_\rho^{(i)}<\mu_\rho^{(j)}\), and therefore \(i\prec_M j\).

    Let \(\rho\) be the root of \(\N\) and let \(\mu_\rho:=\mu(\rho)\). For every vertex \(v\in V(\N)\), every directed path from \(v\) to a leaf can be extended backwards to a directed path from \(\rho\) to the same leaf. Hence \(\mu(v)\leq\mu_\rho\), and therefore \(\mu_\rho\) is the maximum element of \(M\).

    Let \(p(u,v)\) denote the number of directed paths from \(u\) to \(v\). By Corollary \ref{thm:tree-path-unicity}, there is a unique directed path from \(r(l_i)\) to \(l_i\). Moreover, every path from \(\rho\) to \(l_i\) contains \(r(l_i)\). Therefore
    \[
    p(\rho,r(l_i))=p(\rho,l_i)=\mu_\rho^{(i)}.
    \]
    Analogously, \(p(\rho,r(l_j))=\mu_\rho^{(j)}\). Since \(r(l_i)>r(l_j)\), fix a directed path from \(r(l_i)\) to \(r(l_j)\), and let \(e\) be its last edge. Concatenating this path with each path from \(\rho\) to \(r(l_i)\) gives at least \(p(\rho,r(l_i))\) distinct paths from \(\rho\) to \(r(l_j)\) ending with \(e\). Since \(r(l_j)\) is a reticulation vertex, it has another incoming edge \(e'\ne e\), which yields at least one additional path from \(\rho\) to \(r(l_j)\). Hence
    \[
    p(\rho,r(l_j))>p(\rho,r(l_i)),
    \]
    and thus \(\mu_\rho^{(j)}>\mu_\rho^{(i)}\). Therefore \(i\prec_M j\), and \(\prec_M\) is a tree-path order on \([n]\).
\end{proof}


\begin{lem}\label{lem:mu-partition}
Let \(M\subseteq\mathbb N^n\) be a finite set of non-zero vectors with maximum
element \(\mu_\rho\). Then
$\mathcal P_M
=
\{P_i(M):i \in [n]\}$
is a partition of \(M\).
\end{lem}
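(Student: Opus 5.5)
The argument only needs the facts that \(\prec_M\) is a total order on \([n]\) and that every vector in \(M\) is non-zero. The maximum element \(\mu_\rho\) serves solely to define \(\prec_M\), so no further use of it is required. I would prove the two parts of a partition separately: first that every vector of \(M\) lies in some \(P_i(M)\), and then that it lies in at most one.

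For coverage, I first check that \(\prec_M\) is a strict total order on \([n]\). It is the lexicographic order on the pairs \((\mu_\rho^{(i)},i)\). These pairs are pairwise distinct because their second components are distinct, so the order is irreflexive, transitive and total. Now take \(\mu\in M\). Since \(\mu\neq 0\), the support \(S(\mu)=\{i\in[n]:\mu^{(i)}>0\}\) is a non-empty subset of the finite totally ordered set \(([n],\prec_M)\). It therefore has a unique \(\prec_M\)-minimal element \(i_0\). By construction \(\mu^{(i_0)}>0\), and \(\mu^{(h)}=0\) for every \(h\prec_M i_0\), because any such \(h\) lies outside \(S(\mu)\). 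Hence \(\mu\in P_{i_0}(M)\), and \(\bigcup_i P_i(M)=M\).

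For disjointness, suppose \(\mu\in P_i(M)\cap P_j(M)\) with \(i\neq j\). By totality, without loss of generality \(i\prec_M j\). Membership in \(P_j(M)\) forces \(\mu^{(i)}=0\), while membership in \(P_i(M)\) forces \(\mu^{(i)}>0\), which is a contradiction. So the sets \(P_i(M)\) are pairwise disjoint.

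One point of convention needs care, and it is the only real obstacle. Some \(P_i(M)\) may be empty, for instance when \(M\) contains no vector whose first non-zero coordinate is \(i\). If the paper's notion of partition forbids empty blocks, I would either restrict \(\mathcal P_M\) to the non-empty \(P_i(M)\) or note that the statement is meant in the weak sense, since this is the only place the argument could fail. In the intended application \(M\) contains the unit vectors \(\delta_i\). Then \(\delta_i\in P_i(M)\), so every block is non-empty. I would remark on this after the proof rather than add it as a hypothesis.
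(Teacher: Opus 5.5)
Your proof is correct and follows the paper's argument. Coverage comes from the unique $\prec_M$-minimal element of the support of a non-zero vector, and disjointness holds because a vector cannot have two distinct first non-zero coordinates. Your remark on possibly empty blocks is a fair point that the paper leaves implicit; it only becomes moot once $\delta_1,\ldots,\delta_n\in M$, as required by condition (1) of Definition~\ref{def:tc-mu-compatible}.
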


\begin{proof}
Let \(\mu\in M\). Since \(\mu\neq 0\), there exists at least one index \(i\) such
that \(\mu^{(i)}>0\). Since \(\prec_M\) is a total order, there is a unique minimal
such index with respect to \(\prec_M\). By definition, \(\mu\) belongs to the
corresponding set \(P_i(M)\). Hence the sets \(P_i(M)\) cover \(M\).

Moreover, \(\mu\) cannot belong to two distinct sets \(P_i(M)\) and \(P_j(M)\),
because this would mean that both \(i\) and \(j\) are the first non-zero coordinate
of \(\mu\) with respect to \(\prec_M\). Therefore the sets \(P_i(M)\) are pairwise
disjoint, and so they form a partition of \(M\).
\end{proof}

\begin{rmk}
Notice that Lemma \ref{lem:mu-partition} is the \(\mu\)-vector analogue of Proposition \ref{prop:partition_Pl} used in the previous
section. 
\end{rmk}

We next relate the partition of \(M\) with the ordered tree-path decompositions introduced in 
the previous section. For this purpose, we first state the correspondence for an
arbitrary total order \(\prec\) on \([n]\).
Let
\[
P_i^\prec(M)
=
\{
\mu\in M:
\mu^{(i)}>0
\text{ and }
\mu^{(h)}=0
\text{ for every }h\prec i
\}.
\]
Thus \(P_i^\prec(M)\) consists of those vectors whose first non-zero coordinate,
with respect to \(\prec\), is the \(i\)-th coordinate.

\begin{lem}\label{lem:reachable-leaves-from-ordered-path}
Let \(\N\) be a phylogenetic network admitting an ordered tree-path decomposition
$(\prec,\mathcal P_\N)$,
with $\mathcal P_\N=\{P(l_i):i \in [n]\}$.
If \(v\in P(l_i)\), then every leaf reachable from \(v\) is of the form \(l_k\) with
$i\preceq k$.
In particular, \(v\rightsquigarrow l_i\), and no leaf \(l_h\) with \(h\prec i\) is
reachable from \(v\).
\end{lem}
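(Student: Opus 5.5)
The plan is to argue along arcs, using condition (2) of Definition~\ref{def:ordered-tree-path-decomposition} as the only structural input. For each vertex \(u\in V(\N)\), write \(\pi(u)\) for the unique index with \(u\in P(l_{\pi(u)})\); it is well defined because \(\mathcal P_\N\) is a partition of \(V(\N)\).

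The key step is to show that along every arc \((u,w)\) of \(\N\) we have \(\pi(u)\preceq\pi(w)\). Condition (2) leaves only two possibilities for such an arc.
\begin{enumerate}
\item The arc \((u,w)\) is an internal arc \((v_i^j,v_i^{j-1})\) of some path. Then \(\pi(u)=\pi(w)=i\).
\item The arc \((u,w)\) is a bridge \((v_i^j,v_k^{m_k})\) with \(i\prec k\). Then \(\pi(u)=i\prec k=\pi(w)\).
\end{enumerate}

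Now let \(v\in P(l_i)\) and let \(l_k\) be a leaf reachable from \(v\). Fix a path \((v=x_0,x_1,\ldots,x_s=l_k)\). Applying the arc inequality to each arc and using transitivity of the total order \(\prec\), an induction on \(s\) gives
\[
i=\pi(x_0)\preceq\pi(x_1)\preceq\cdots\preceq\pi(x_s).
\]
It remains to identify \(\pi(l_k)\). The leaf \(l_k\) is the last vertex \(v_k^1\) of \(P(l_k)\), so \(l_k\in P(l_k)\). Since the sets \(P(l_h)\) are disjoint, \(l_k\) lies in no other part, and hence \(\pi(l_k)=k\). This gives \(i\preceq k\), which is the main claim.

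The ``in particular'' part follows directly. By condition (1), \(P(l_i)\) is a directed path ending at \(l_i\) and containing \(v\), so following it from \(v\) gives \(v\rightsquigarrow l_i\). If some \(l_h\) with \(h\prec i\) were reachable from \(v\), the main claim would give \(i\preceq h\), contradicting \(h\prec i\) because \(\prec\) is a total order.

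I do not expect a real obstacle here. The one point that needs care is checking that condition (2) is exhaustive: every arc of \(\N\) must be either internal or a forward bridge. Once that is granted, the arc inequality and the path induction follow immediately.
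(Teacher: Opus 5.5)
Your proposal is correct and follows the paper's own argument. Condition (2) forces every arc to stay in the same part or move to a strictly later part with respect to \(\prec\), so any directed path from \(v\in P(l_i)\) only reaches leaves \(l_k\) with \(i\preceq k\). Your version just makes explicit the index map \(\pi\) and the fact that \(l_k\) lies only in \(P(l_k)\), which the paper leaves implicit.
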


\begin{proof}
Write
$P(l_i)=
\bigl(
v_i^{m_i},
v_i^{m_i-1},
\ldots,
v_i^1=l_i
\bigr)$.
Since \(v\in P(l_i)\), the internal arcs of \(P(l_i)\) give a directed path from
\(v\) to \(l_i\). Hence \(v\rightsquigarrow l_i\).

Now consider any directed path starting at \(v\). By
Definition~\ref{def:ordered-tree-path-decomposition}, every arc of \(\N\) is either
an internal arc of one of the paths in \(\mathcal P_\N\), or a bridge from a vertex
of one path to the maximal vertex of a later path. Therefore, along any directed
path, the current path can only stay the same or move to a later path with respect
to \(\prec\). Hence a path starting in \(P(l_i)\) cannot reach a leaf \(l_h\) with
\(h\prec i\). This proves the claim.
\end{proof}

\begin{prop}\label{prop:mu-partition-from-ordered-decomposition}
Let \(\N\) be a tree-child phylogenetic network without elementary paths with leaf set $L(\N)=\{l_1,\ldots,l_n\}$,
and suppose that \(\N\) admits an ordered tree-path decomposition
$(\prec,\mathcal P_\N = \{P(l_i):i\in[n]\})$.
Let $M=\mu(\N)$. 
Then, for every \(i\in[n]\),
\[
\mu(P(l_i)):=\{\mu(v):v\in P(l_i)\}=P_i^\prec(M).
\]
\end{prop}

\begin{proof}
Let \(v\in P(l_i)\). By Lemma~\ref{lem:reachable-leaves-from-ordered-path}, we have
\(v\rightsquigarrow l_i\), and no leaf \(l_h\) with \(h\prec i\) is reachable from
\(v\). Since the \(h\)-th coordinate of \(\mu(v)\) counts the number of directed
paths from \(v\) to \(l_h\), it follows that
$\mu(v)^{(i)}>0$ 
and
$\mu(v)^{(h)}=0$ for every $h\prec i$.
Thus \(\mu(v)\in P_i^\prec(M)\), and hence
$\mu(P(l_i))\subseteq P_i^\prec(M)$.

Conversely, let \(\mu\in P_i^\prec(M)\). Since \(M=\mu(\N)\), there exists a vertex
\(v\in V(\N)\) such that $\mu=\mu(v)$.
As \(\mathcal P_\N\) is a partition of \(V(\N)\), there is a unique
\(k\in[n]\) such that
$v\in P(l_k)$.
Applying Lemma~\ref{lem:reachable-leaves-from-ordered-path} to \(P(l_k)\), we get
that \(v\rightsquigarrow l_k\), and that no leaf \(l_h\) with \(h\prec k\) is
reachable from \(v\). Hence the first non-zero coordinate of \(\mu(v)\), with
respect to \(\prec\), is the \(k\)-th coordinate.

On the other hand, since \(\mu=\mu(v)\in P_i^\prec(M)\), the first non-zero
coordinate of \(\mu(v)\), with respect to \(\prec\), is the \(i\)-th coordinate.
Therefore \(i=k\). Hence \(v\in P(l_i)\), and so
$\mu\in \mu(P(l_i))$.
Thus
$P_i^\prec(M)\subseteq \mu(P(l_i))$,
and the equality follows.
\end{proof}

In the characterization below we shall use the canonical order \(\prec_M\) induced
by the maximum vector \(\mu_\rho\). Thus, for the rest of this section, when
\(M\) has a maximum element and no order is explicitly specified, we write
$P_i(M):=P_i^{\prec_M}(M)$.

We now define the vector condition that will characterize \(\mu\)-representations of
tree-child networks. The definition is designed to mirror the ordered tree-path
decomposition of the previous section: the sets \(P_i(M)\) will play the role of the
paths, and the differences between consecutive vectors will encode the bridges.

\begin{defn}\label{def:tc-mu-compatible}
Let \(M\subseteq\mathbb N^n\) be a finite set of vectors. We say that \(M\) is
\emph{tree-child \(\mu\)-compatible} if the following conditions hold.

\begin{enumerate}
    \item Every vector in \(M\) is non-zero, and
    $\delta_1,\ldots,\delta_n\in M$.

    \item \(M\) has a maximum element for the coordinatewise order. We denote it by
    $\mu_\rho$.

    \item Let \(\prec_M\) be the canonical order induced by \(\mu_\rho\), and let
    $\mathcal P_M=\{P_i(M):i\in[n]\}$
    be the partition defined above. For each \(i \in [n]\), the set \(P_i(M)\) is a chain
    for the coordinatewise order. More precisely, its elements can be written as
    $    P_i(M)=
    \{\mu_i^1,\mu_i^2,\ldots,\mu_i^{m_i}\}$,
    with
    \(\mu_i^1=\delta_i\) and \(\mu_i^1<\mu_i^2<\cdots<\mu_i^{m_i}\).
    We denote the maximal element of this chain by
    $ \rho_i:=\mu_i^{m_i}$.

    \item For every \(i\in[n]\) and every
    \(j=2,\ldots,m_i\), the difference between two consecutive vectors in the
    chain \(P_i(M)\) is a sum of maximal elements of later chains. That is, there
    
    exists a non-empty subset of \(\{k:i\prec_M k\}\), which we fix and denote by
    $B_i^j$ such that
    $\mu_i^j-\mu_i^{j-1}
    =
    \sum_{k\in B_i^j}\rho_k$.

\end{enumerate}
\end{defn}


\begin{lem}\label{lem:condition5} 
Under the assumptions of Definition \ref{def:tc-mu-compatible}, let \(i_\rho\) be the unique index such that \(\mu_\rho\in P_{i_\rho}(M)\). Then every maximal vector \(\rho_k\), with \(k\neq i_\rho\), appears in at least one of the sums in condition \((4)\). Equivalently, for every \(k\neq i_\rho\), there exist \(i\in[n]\) and \(j\in\{2,\ldots,m_i\}\) such that \(k\in B_i^j\).
\end{lem}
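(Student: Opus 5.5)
The idea is to expand the maximum vector \(\mu_\rho\) recursively, using the telescoping identities of condition \((4)\), until it is written purely as a sum of unit vectors \(\delta_k\). Each coordinate of \(\mu_\rho\) is positive, so every \(\delta_k\) must occur in this expansion. We then check that the only way \(\delta_k\) can enter, for \(k\neq i_\rho\), is through a term \(\rho_k\) appearing in some sum \(\sum_{k'\in B_i^j}\rho_{k'}\).

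First, \(\mu_\rho=\rho_{i_\rho}\). Indeed, \(\mu_\rho\in P_{i_\rho}(M)\) by definition of \(i_\rho\). Since \(\mu_\rho\) dominates every element of \(M\), it is the top of the chain \(P_{i_\rho}(M)\) given by condition \((3)\). Also, \(\delta_k\le\mu_\rho\) for every \(k\) by conditions \((1)\) and \((2)\), so \(\mu_\rho^{(k)}\ge 1\) for all \(k\in[n]\).

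Next, for each \(k\in[n]\) I define a finite multiset \(E(k)\) of indices by descending induction along \(\prec_M\):
\[
E(k)=\{k\}\uplus\biguplus_{j=2}^{m_k}\ \biguplus_{k'\in B_k^j}E(k').
\]
This is well defined because every \(k'\in B_k^j\) satisfies \(k\prec_M k'\), so \(E(k')\) has already been defined; for the \(\prec_M\)-last index, all \(B\)-sets are empty, so \(m_k=1\).

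Telescoping the chain \(P_k(M)\) with condition \((4)\) gives
\[
\rho_k=\delta_k+\sum_{j=2}^{m_k}\sum_{k'\in B_k^j}\rho_{k'}.
\]
By induction this yields \(\rho_k=\sum_{h\in E(k)}\delta_h\), counted with multiplicity. By construction, any index \(h\neq k\) occurring in \(E(k)\) arises as the root label of some nested \(E(h)\). That only happens if \(h\in B_{i}^{j}\) for some \(i\) whose own expansion is nested in \(E(k)\) and some \(j\in\{2,\ldots,m_i\}\). A short induction on the nesting depth makes this precise.

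Finally, take \(k=i_\rho\). Since \(\mu_\rho=\rho_{i_\rho}=\sum_{h\in E(i_\rho)}\delta_h\), the \(k\)-th coordinate \(\mu_\rho^{(k)}\) equals the multiplicity of \(k\) in \(E(i_\rho)\), because each \(\delta_h\) contributes only to coordinate \(h\). As \(\mu_\rho^{(k)}\ge1\), every \(k\) occurs in \(E(i_\rho)\). For \(k\neq i_\rho\), the previous paragraph then gives \(i\) and \(j\) with \(k\in B_i^j\), which is the claim.

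The main obstacle is the bookkeeping in the statement that an index \(h\neq k\) enters \(E(k)\) only via some \(B_i^j\). This is immediate from the recursive definition, but it should be phrased as an explicit induction along \(\prec_M\) to keep it rigorous.
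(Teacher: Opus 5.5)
Your proof is correct, but it argues directly where the paper argues by contradiction.

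\textbf{The paper's route.} It assumes $k\neq i_\rho$ lies in no $B_i^j$ and uses that $i_\rho$ is the $\prec_M$-first index. It then shows, by reverse induction over the indices $i\prec_M k$, that every vector of $P_i(M)$ has zero $k$-th coordinate. The summands $\rho_h$, $h\in B_i^j$, are split into three cases: $h=k$ (excluded by hypothesis), $i\prec_M h\prec_M k$ (induction), and $k\prec_M h$ (the first-non-zero-coordinate definition of $P_h(M)$). At $i=i_\rho$ this contradicts $\mu_\rho^{(k)}\ge 1$.

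\textbf{Your route.} You unfold the telescoped identity $\rho_k=\delta_k+\sum_{j}\sum_{k'\in B_k^j}\rho_{k'}$ completely, so that $\mu_\rho^{(k)}$ is the multiplicity of $k$ in $E(i_\rho)$. In effect this is the number of paths from $\mu_\rho$ to $\delta_k$ in the reconstruction digraph, so it anticipates Proposition~\ref{prop:reconstruction-has-mu-set}. You then note that any label other than the top one can enter the expansion only through some $B_i^j$.

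\textbf{What each buys.} The paper's argument is leaner: it tracks only whether a single coordinate vanishes, with no multisets. Yours gives an exact count rather than mere positivity. It also needs less: neither the fact that $i_\rho$ is $\prec_M$-first nor the defining property of the sets $P_h(M)$ is used, only the telescoped form of condition $(4)$ and the fact that the $B$-sets point $\prec_M$-forward.

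\textbf{Two small points.} The bookkeeping step you flag as the main obstacle is a one-line descending induction on $k$. If $h\neq k$ occurs in $E(k)$, then it occurs in some $E(k')$ with $k'\in B_k^j$, and either $h=k'$ or the induction hypothesis for $k'$ applies.

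There is also a minor slip about the $\prec_M$-last index. The point is not that its $B$-sets are empty. Rather, condition $(4)$ cannot hold for any $j\ge 2$, since it would require a non-empty subset of $\emptyset$; hence $m_k=1$. This remark is not needed for $E$ to be well defined anyway.
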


\begin{proof}
Suppose, for a contradiction, that \(\rho_k\), with \(k\neq i_\rho\), does not appear in any of the sums in condition \((4)\). Since \(\delta_h\leq\mu_\rho\) for every \(h\in[n]\), all coordinates of \(\mu_\rho\) are strictly positive. Hence \(i_\rho\) is the first index with respect to \(\prec_M\), and in particular \(i_\rho\prec_M k\).

We prove, by reverse induction on the indices preceding \(k\) in \(\prec_M\), that every vector in \(P_i(M)\) has zero \(k\)-th coordinate. Let \(i\prec_M k\), and assume that \(\rho_h^{(k)}=0\) for every \(h\) such that \(i\prec_M h\prec_M k\). For every \(h\in B_i^j\), we have \(i\prec_M h\). The case \(h=k\) is excluded by assumption; if \(i\prec_M h\prec_M k\), then \(\rho_h^{(k)}=0\) by the induction hypothesis; and if \(k\prec_M h\), then \(\rho_h^{(k)}=0\) by the definition of \(P_h(M)\). Therefore, every maximal vector occurring in
\[
\mu_i^j-\mu_i^{j-1}=\sum_{h\in B_i^j}\rho_h
\]
has zero \(k\)-th coordinate. Since \(\delta_i^{(k)}=0\), it follows that every vector in \(P_i(M)\) has zero \(k\)-th coordinate.

Applying this to \(i=i_\rho\), we obtain \(\mu_\rho^{(k)}=\rho_{i_\rho}^{(k)}=0\), contradicting \(\delta_k\leq\mu_\rho\). Hence \(\rho_k\) must appear in at least one of the sums in condition \((4)\).
\end{proof}

\begin{rmk}
The conditions in Definition~\ref{def:tc-mu-compatible} have different roles.
Conditions \((1)\) and \((2)\) identify the candidate leaves and root vector. Condition
\((3)\) is the \(\mu\)-analogue of the path partition. Condition \((4)\) encodes the
bridges between paths. 
\end{rmk}

\begin{rmk}
Notice that, by Lemma~\ref{lem:mu-partition}, the collection $\mathcal P_M=\{P_i(M):i\in[n]\}$ being a partition is not an additional condition in
Definition~\ref{def:tc-mu-compatible},  condition $(3)$.
\end{rmk}

\begin{defn}\label{def:mu-reconstruction-digraph}
Let \(M\subseteq\mathbb N^n\) be tree-child \(\mu\)-compatible. For the fixed choices of the subsets \(B_i^j\) in condition \((4)\), the
\emph{reconstruction digraph} associated with \(M\) is the directed graph $\N(M)=(M,E_M)$ 
whose vertex set is \(M\), and whose arc set is defined as follows.

\begin{enumerate}
    \item For every \(i\in [n]\) and every \(j=2,\ldots,m_i\), we add the
     arc $(\mu_i^j,\mu_i^{j-1})$.
    
    \item For every \(i\in [n]\), every \(j=2,\ldots,m_i\), and every
    \(k\in B_i^j\), we add the arc
    $(\mu_i^j,\rho_k)$.
\end{enumerate}
\end{defn}

\begin{prop}\label{prop:reconstruction-is-tc-network}
Let \(M\subseteq\mathbb N^n\) be tree-child \(\mu\)-compatible. Then the
reconstruction digraph
\(\N(M)=(M,E_M)\),
together with the leaf-labelling
\(\delta_i \mapsto i\) for \(i\in[n]\),
is a tree-child phylogenetic network on \([n]\). 
Moreover, \(\N(M)\) has no elementary paths.
\end{prop}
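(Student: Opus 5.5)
The plan is to check the axioms of a phylogenetic network directly on \(\N(M)\), then get the tree-child property for free from Theorem~\ref{thm:ordered-tree-path-characterization}. For that, I will exhibit \((\prec_M,\{P_i(M)\}_{i\in[n]})\) as an ordered tree-path decomposition of \(\N(M)\). Finally, the absence of elementary paths will follow from an out-degree count.

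\emph{Acyclicity.} The key observation is that every arc \((u,v)\in E_M\) satisfies \(v<u\) in the coordinatewise order.

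For internal arcs \((\mu_i^j,\mu_i^{j-1})\), this is the chain condition (3).

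For a bridge \((\mu_i^j,\rho_k)\) with \(k\in B_i^j\), condition (4) gives
\[
\rho_k\le\sum_{h\in B_i^j}\rho_h=\mu_i^j-\mu_i^{j-1}.
\]
Since \(\mu_i^{j-1}\neq 0\) by condition (1), we get \(\rho_k<\mu_i^j\).

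Hence any directed path strictly decreases in \(\mathbb N^n\), so \(\N(M)\) is acyclic. Also, \(E_M\) has no loops or repeated arcs: bridges land in chains \(P_k(M)\) with \(k\neq i\), and each \(B_i^j\) is a set.

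\emph{Leaves.} A vertex \(\mu_i^j\) with \(j\ge 2\) has the internal out-arc to \(\mu_i^{j-1}\). The vertex \(\mu_i^1=\delta_i\) has no out-arcs at all, since both arc types in Definition~\ref{def:mu-reconstruction-digraph} start at some \(\mu_i^j\) with \(j\ge2\). So the leaves are exactly \(\delta_1,\ldots,\delta_n\), and the labelling \(\delta_i\mapsto i\) is a bijection onto \([n]\).

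\emph{Root and reachability.} This is the step I expect to need the most care, because it is where Lemma~\ref{lem:condition5} is used.

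Every non-maximal chain element \(\mu_i^{j}\) with \(j<m_i\) has in-degree exactly \(1\), coming from \(\mu_i^{j+1}\). This holds because bridges only enter chain maxima \(\rho_k\).

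Every \(\rho_k\) with \(k\neq i_\rho\) receives at least one bridge by Lemma~\ref{lem:condition5}.

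The vector \(\mu_\rho\) is the maximum of \(M\), hence the top of its chain, so \(\mu_\rho=\rho_{i_\rho}\). Since arcs strictly decrease vectors, \(\mu_\rho\) has in-degree \(0\).

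Thus \(\mu_\rho\) is the unique source of the finite DAG \(\N(M)\). Walking backwards from any vertex must therefore terminate at \(\mu_\rho\), so every vertex is reachable from \(\mu_\rho\). Hence \(\N(M)\) is a phylogenetic network on \([n]\) with root \(\mu_\rho\).

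\emph{Tree-child.} By construction, each \(P_i(M)=(\mu_i^{m_i},\ldots,\mu_i^1=\delta_i)\) is a directed path ending at the leaf labelled \(i\), using the internal arcs. The sets \(P_i(M)\) partition the vertex set by Lemma~\ref{lem:mu-partition}. Every other arc is a bridge \((\mu_i^j,\rho_k)\) with \(i\prec_M k\), that is, an arc into the maximal vertex of a later path.

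So \((\prec_M,\{P_i(M)\})\) satisfies Definition~\ref{def:ordered-tree-path-decomposition}, and Theorem~\ref{thm:ordered-tree-path-characterization} shows that \(\N(M)\) is tree-child.

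\emph{No elementary paths.} Each \(\mu_i^j\) with \(j\ge 2\) has out-degree \(1+|B_i^j|\ge 2\), because \(B_i^j\) is non-empty by condition (4). Leaves have out-degree \(0\). Hence no vertex has out-degree \(1\), so there is no arc \((u,v)\) with \(\outdeg(u)=1\) and \(\indeg(v)=1\), and \(\N(M)\) has no elementary paths.
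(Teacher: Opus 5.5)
Your proof is correct and follows the same outline as the paper's: a unique source via Lemma~\ref{lem:condition5}, a backward walk for reachability, leaves being exactly the $\delta_i$, and $B_i^j\neq\emptyset$ forcing out-degree at least $2$. The differences are minor: you get acyclicity and $\indeg(\mu_\rho)=0$ from the fact that every arc strictly decreases the coordinatewise order, rather than from the chain order $\prec_M$, and you obtain tree-childness by citing Theorem~\ref{thm:ordered-tree-path-characterization} for $(\prec_M,\{P_i(M)\})$, whose converse proof is exactly the paper's direct in-degree-one argument.
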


\begin{proof}
We first prove that \(\N(M)\) is a rooted directed acyclic graph.

By condition \((2)\) in Definition~\ref{def:tc-mu-compatible}, \(M\) has a maximum
element for the coordinatewise order, denoted by \(\mu_\rho\). Since
$\delta_1,\ldots,\delta_n\in M$
by condition \((1)\), and \(\delta_i\leq \mu_\rho\) for every \(i\), all coordinates
of \(\mu_\rho\) are strictly positive. Hence \(\mu_\rho\) belongs to the set \(P_{i_\rho}(M)\),
where \(i_\rho\) is the first index with respect to the canonical order
\(\prec_M\). Since \(\mu_\rho\) is the maximum element of \(M\), it is also the
maximal element of the chain \(P_{i_\rho}(M)\), that is,
$\rho_{i_\rho}=\mu_\rho$.

By construction, the arcs of \(\N(M)\) are of two types. The arcs of the form
$(\mu_i^j,\mu_i^{j-1})$ 
 go downwards inside a chain. The rest of the arcs have the form
$(\mu_i^j,\rho_k)$, with 
$i\prec_M k$,
and therefore go from one chain to a later chain. Hence \(\N(M)\) is acyclic.

We now identify the root. The vector \(\mu_\rho=\rho_{i_\rho}\) has in-degree zero.
Indeed, it is the maximal element of the first chain, and bridge arcs only enter maximal
vectors of later chains. Thus no arc can enter \(\mu_\rho\). Moreover, we now prove that this root is unique. Let \(\rho_k\) be the maximal element of a chain \(P_k(M)\) with
\(k\neq i_\rho\). By Lemma \ref{lem:condition5},  there exist \(i\) and \(j\) such that
$k\in B_i^j$.
Therefore the bridge arc
$(\mu_i^j,\rho_k)$
belongs to \(E_M\), and so \(\rho_k\) has positive in-degree. Every non-maximal
element of a chain has positive in-degree because it is entered by the corresponding
internal arc of that chain. Therefore \(\mu_\rho\) is the unique vertex of
\(\N(M)\) with in-degree zero.

We also have that every vertex is reachable from \(\mu_\rho\). Indeed, since \(\N(M)\) is finite and acyclic, starting from any vertex and repeatedly following an incoming arc must eventually reach a vertex of in-degree zero. Since \(\mu_\rho\) is the unique vertex of \(\N(M)\) with in-degree zero, every vertex is reachable from \(\mu_\rho\). Thus \(\N(M)\) is a rooted DAG with root \(\mu_\rho\).

We now identify the leaves. For each \(i\), the vector
$\delta_i=\mu_i^1$
has no outgoing internal arc, and no bridge is defined from \(\mu_i^1\). Hence
\(\delta_i\) is a leaf. Conversely, if \(\mu_i^j\) is not equal to \(\delta_i\), then
\(j\geq 2\), and the internal arc
$(\mu_i^j,\mu_i^{j-1})$
belongs to \(E_M\). Hence \(\mu_i^j\) is not a leaf. Therefore the leaves of
\(\N(M)\) are precisely
$\delta_1,\ldots,\delta_n$, which are labelled by \(1,2,\ldots,n\), respectively.

It remains to prove that \(\N(M)\) is tree-child. Let \(\mu_i^j\) be an inner
vertex. Then \(j\geq 2\), and by construction
$(\mu_i^j,\mu_i^{j-1})\in E_M$.
We claim that \(\mu_i^{j-1}\) is a tree vertex. Since \(\mu_i^{j-1}\) is not the
maximal element of its chain, no bridge can enter it. Moreover, the only internal
arc entering it is
$(\mu_i^j,\mu_i^{j-1})$.
Therefore
$\indeg(\mu_i^{j-1})=1$, 
so \(\mu_i^{j-1}\) is a tree vertex. Hence every inner vertex has a tree child, and
\(\N(M)\) is tree-child.

Finally, we show that \(\N(M)\) has no elementary paths. Let
$(\mu_i^j,\mu_i^{j-1})$
be an internal arc. Since \(j\geq 2\), condition \((4)\) gives
$\mu_i^j-\mu_i^{j-1}
=
\sum_{k\in B_i^j}\rho_k$.
The left-hand side is non-zero because
$\mu_i^{j-1}<\mu_i^j$.
Hence \(B_i^j\neq\emptyset\). Therefore \(\mu_i^j\) has at least one bridge as an
additional outgoing arc, besides the internal arc to \(\mu_i^{j-1}\). Thus
$\outdeg(\mu_i^j)\geq 2$.
Every arc of \(\N(M)\) has a vertex of this form as its tail. Consequently, no arc has a tail of out-degree one, and thus \(\N(M)\) has no elementary paths.
\end{proof}

\begin{prop}\label{prop:reconstruction-has-mu-set}
Let \(M\subseteq\mathbb N^n\) be tree-child \(\mu\)-compatible. Then
$\mu(\N(M))=M$, where \(\N(M)\) is endowed with the leaf-labelling \(\delta_i \mapsto i\).
\end{prop}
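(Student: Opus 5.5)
We need to show that for every vertex \(\nu\in M\) of the reconstruction digraph \(\N(M)\), the path-multiplicity vector computed in \(\N(M)\) equals \(\nu\) itself. That is, writing \(\mu_{\N(M)}(\nu)\) for the \(\mu\)-vector of the vertex \(\nu\) in \(\N(M)\), we want \(\mu_{\N(M)}(\nu)=\nu\). Once this holds for every vertex, \(\mu(\N(M))=\{\nu:\nu\in M\}=M\) as sets. This is consistent with Proposition~\ref{prop:mu_is_set}, since by Proposition~\ref{prop:reconstruction-is-tc-network} \(\N(M)\) is tree-child without elementary paths.

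I will prove \(\mu_{\N(M)}(\nu)=\nu\) by induction along the acyclic structure of \(\N(M)\). The induction is over the pairs \((i,j)\), with \(i\) ordered by reverse \(\prec_M\) and, within a fixed \(i\), by increasing \(j\). Concretely, I process the chains from the \(\prec_M\)-last index backwards. Within a chain \(P_i(M)\) I go upward from \(\mu_i^1=\delta_i\) to \(\rho_i\). This order is legitimate because every arc out of \(\mu_i^j\) goes either to \(\mu_i^{j-1}\), which lies in the same chain at a lower level, or to some \(\rho_k\) with \(i\prec_M k\), which lies in a chain already treated. So when we reach \(\mu_i^j\), the claim is known for all of its children.

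For the base case \(j=1\), the vertex \(\delta_i\) is the leaf labelled \(i\), as shown in Proposition~\ref{prop:reconstruction-is-tc-network}. Its \(\mu\)-vector is therefore \(\delta_i\) by definition. For the inductive step \(j\ge 2\), the children of \(\mu_i^j\) are \(\mu_i^{j-1}\) together with \(\rho_k\) for \(k\in B_i^j\). Lemma~\ref{lem:children_mu} expresses the \(\mu\)-vector of a vertex as the sum over its children. Combined with the induction hypothesis and condition (4), this gives
\[
\mu_{\N(M)}(\mu_i^j)=\mu_i^{j-1}+\sum_{k\in B_i^j}\rho_k=\mu_i^j .
\]

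The main point to check is that the children are counted with the correct multiplicity, so that the sum in Lemma~\ref{lem:children_mu} matches condition (4) exactly. Lemma~\ref{lem:children_mu} sums over the set of children. Since the graph has no multiple arcs, we need distinct indices \(k\in B_i^j\) to give distinct vertices \(\rho_k\), and each \(\rho_k\) to differ from \(\mu_i^{j-1}\). The first holds because the chains \(P_k(M)\) are disjoint by Lemma~\ref{lem:mu-partition}. The second holds because \(\mu_i^{j-1}\in P_i(M)\), while \(\rho_k\in P_k(M)\) with \(k\ne i\).

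We also need that Lemma~\ref{lem:children_mu} applies to \(\N(M)\). It does, because Proposition~\ref{prop:reconstruction-is-tc-network} shows \(\N(M)\) is a phylogenetic network, and the leaf labelling \(\delta_i\mapsto i\) is the one under which the \(\mu\)-vectors are computed. With these points checked, the induction closes, and so \(\mu(\N(M))=M\).
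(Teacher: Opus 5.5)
Your proof is correct and follows the paper's argument essentially verbatim: reverse induction over the chains in the order \(\prec_M\), induction on \(j\) within each chain, the leaf base case \(\mu_i^1=\delta_i\), and Lemma~\ref{lem:children_mu} combined with condition \((4)\) for the inductive step. You also check that the vertices \(\rho_k\), \(k\in B_i^j\), are pairwise distinct and distinct from \(\mu_i^{j-1}\), so that the sum over children matches condition \((4)\) exactly; the paper leaves this implicit, and it is a worthwhile addition.
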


\begin{proof}
Let $\N(M)=(M,E_M)$ be the reconstruction digraph. For a vertex \(\mu\in M\), let
$\mu_{\N(M)}(\mu)$
denote its \(\mu\)-vector in the network \(\N(M)\). We shall prove that
$\mu_{\N(M)}(\mu)=\mu$
for every \(\mu\in M\). This will imply \(\mu(\N(M))=M\).

Recall that, by Definition~\ref{def:tc-mu-compatible}, the canonical partition of
\(M\) is
$
\mathcal P_M=\{P_i(M):i \in [n]\}$,
where
$P_i(M)=\{\mu_i^1,\mu_i^2,\ldots,\mu_i^{m_i}\}$,
 with 
$\delta_i=\mu_i^1<\mu_i^2<\cdots<\mu_i^{m_i}=\rho_i$.

We prove the claim by reverse induction on the order \(\prec_M\) of the chains, and
inside each chain by induction on \(j\).
Let \(i\) be fixed, and suppose that the claim has already been proved for all
vertices belonging to chains \(P_k(M)\) with
$i\prec_M k$.
We prove it for the vertices of \(P_i(M)\).

For \(j=1\), we have
$\mu_i^1=\delta_i$.
By Proposition~\ref{prop:reconstruction-is-tc-network}, the leaves of \(\N(M)\) are
precisely \(\delta_1,\ldots,\delta_n\), which are associated respectively to labels \(1, \ldots, n\). Hence
$\mu_{\N(M)}(\mu_i^1)=\mu_{\N(M)}(\delta_i)=\delta_i=\mu_i^1$.

Now let \(j\geq 2\), and assume that
$\mu_{\N(M)}(\mu_i^{j-1})=\mu_i^{j-1}$.
By the definition of $\N(M)$, the children of \(\mu_i^j\) are
$\mu_i^{j-1}$
together with the vertices
$\rho_k$ for $k\in B_i^j$.
Therefore, 
by Lemma \ref{lem:children_mu},
$
\mu_{\N(M)}(\mu_i^j)
=
\mu_{\N(M)}(\mu_i^{j-1})
+
\sum_{k\in B_i^j}\mu_{\N(M)}(\rho_k)$.

By the induction hypothesis inside the chain,
$\mu_{\N(M)}(\mu_i^{j-1})=\mu_i^{j-1}$.
Moreover, for every \(k\in B_i^j\), we have \(i\prec_M k\), and hence, by the
reverse induction hypothesis on the chains,
$\mu_{\N(M)}(\rho_k)=\rho_k$.
Thus
$
\mu_{\N(M)}(\mu_i^j)
=
\mu_i^{j-1}
+
\sum_{k\in B_i^j}\rho_k$.
By condition \((4)\) in Definition~\ref{def:tc-mu-compatible},
$
\mu_i^j-\mu_i^{j-1}
=
\sum_{k\in B_i^j}\rho_k$.
Hence
$\mu_{\N(M)}(\mu_i^j)=\mu_i^j$.

This proves the claim for all vertices in \(P_i(M)\). By reverse induction over the
chains, the claim holds for every vertex of \(M\). 
\end{proof}

We have proved that every tree-child \(\mu\)-compatible set of vectors gives rise to
a tree-child phylogenetic network whose \(\mu\)-representation is the original set.
We now prove the converse: every \(\mu\)-representation of a tree-child network
satisfies the compatibility conditions of Definition \ref{def:tc-mu-compatible}.

\begin{prop}\label{prop:mu-set-is-compatible}
Let \(\N\) be a tree-child phylogenetic network on \([n]\) without elementary paths, and let
$M=\mu(\N)$.
Then \(M\) is tree-child \(\mu\)-compatible.
\end{prop}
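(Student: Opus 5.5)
We verify the four conditions of Definition~\ref{def:tc-mu-compatible} for \(M=\mu(\N)\), identified with a set by Proposition~\ref{prop:mu_is_set}.

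\emph{Conditions (1) and (2).} Every vertex reaches at least one leaf, since \(\N\) is a finite DAG and every maximal directed path ends at a leaf. Hence every vector in \(M\) is non-zero. The leaves give \(\delta_1,\ldots,\delta_n\in M\). As in the proof of Lemma~\ref{lem:mu-is-tree-child-order}, \(\mu(\rho)\) is the maximum element of \(M\) for the coordinatewise order, so condition (2) holds with \(\mu_\rho=\mu(\rho)\).

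\emph{Condition (3).} By Lemma~\ref{lem:mu-is-tree-child-order}, the canonical order \(\prec_M\) is a tree-path order. Proposition~\ref{prop:ordering-is-decomposition} then shows that \((\prec_M,\mathcal P_{\N,\prec_M})\) is an ordered tree-path decomposition. Proposition~\ref{prop:mu-partition-from-ordered-decomposition}, applied with \(\prec=\prec_M\), gives \(\mu(P_{\prec_M}(l_i))=P_i(M)\) for every \(i\). Write the path as \(P_{\prec_M}(l_i)=(v_i^{m_i},\ldots,v_i^1=l_i)\) and set \(\mu_i^j:=\mu(v_i^j)\). Then \(\mu_i^1=\delta_i\). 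Since \(v_i^j\) is a parent of \(v_i^{j-1}\), Lemma~\ref{lem:children_mu} gives \(\mu_i^{j-1}\le\mu_i^j\). Distinct vertices have distinct vectors by Proposition~\ref{prop:mu_is_set}, so these inequalities are strict. The map \(v\mapsto\mu(v)\) is therefore injective on the path, and \(P_i(M)\) is exactly the chain \(\mu_i^1<\cdots<\mu_i^{m_i}\), with \(\rho_i=\mu(v_i^{m_i})\).

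\emph{Condition (4).} For \(j\ge2\), Definition~\ref{def:ordered-tree-path-decomposition}(2) says the children of \(v_i^j\) are \(v_i^{j-1}\) together with maximal vertices \(v_k^{m_k}\) of later paths, i.e.\ with \(i\prec_M k\). Let \(B_i^j\) be the set of such indices \(k\). Distinct children lie in distinct paths, so \(k\mapsto v_k^{m_k}\) is a bijection onto the bridge children. Lemma~\ref{lem:children_mu} then gives
\[
\mu_i^j-\mu_i^{j-1}=\sum_{k\in B_i^j}\rho_k .
\]
It remains to show \(B_i^j\neq\emptyset\). This is equivalent to \(\outdeg(v_i^j)\ge2\), and it is the step needing the no-elementary-paths hypothesis. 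If \(\outdeg(v_i^j)=1\), its unique child \(v_i^{j-1}\) is a tree vertex, because it is not maximal in its path and so has in-degree one by Definition~\ref{def:ordered-tree-path-decomposition}(2). The arc \((v_i^j,v_i^{j-1})\) would then have a tail of out-degree one and a head of in-degree one, i.e.\ it would be elementary, a contradiction. (Alternatively, the strict inequality \(\mu_i^{j-1}<\mu_i^j\) forces the sum to be non-zero.)

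The main point to handle carefully is condition (3). One must ensure the chain uses exactly the vectors of \(P_i(M)\), with no coincidences, and this rests on Proposition~\ref{prop:mu_is_set} together with Proposition~\ref{prop:mu-partition-from-ordered-decomposition}. Everything else is direct bookkeeping from the ordered tree-path decomposition.
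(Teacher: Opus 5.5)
Your proof is correct and follows the paper's argument essentially step for step. For condition (3) you use the same chain Lemma~\ref{lem:mu-is-tree-child-order} $\to$ Proposition~\ref{prop:ordering-is-decomposition} $\to$ Proposition~\ref{prop:mu-partition-from-ordered-decomposition} $\to$ Proposition~\ref{prop:mu_is_set}, and for condition (4) you take $B_i^j$ as the bridge indices, with non-emptiness coming from the no-elementary-paths hypothesis; along the way you also make explicit two points the paper leaves implicit, namely that $\mu_i^{j-1}\le\mu_i^j$ follows from Lemma~\ref{lem:children_mu} and that distinct bridge children correspond to distinct indices $k$.
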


\begin{proof}
We verify the conditions in Definition~\ref{def:tc-mu-compatible}. First, every vector in \(M\) is non-zero, because every vertex of a phylogenetic
network is reachable from the root and, since the graph is finite and acyclic, every
vertex has at least one descendant leaf. Moreover, for each leaf \(l_i\), we have $\mu(l_i)=\delta_i$ since we use the canonical labelling of the leaves,
and hence $\delta_1,\ldots,\delta_n\in M$.

Let \(\rho\) be the root of \(\N\). We claim that
$\mu_\rho:=\mu(\rho)$
is the maximum element of \(M\) for the coordinatewise order. Indeed, for every
vertex \(v\in V(\N)\), every directed path from \(v\) to a leaf can be extended
backwards to a directed path from \(\rho\) to that same leaf, because \(v\) is
reachable from \(\rho\). Therefore
$\mu(v)\leq \mu(\rho)$.


We now prove condition \((3)\).
Let \(\prec_M\) be the canonical order induced by \(\mu_\rho\). By
Lemma~\ref{lem:mu-partition}, the sets
\[
P_i(M)
=
\{
\mu\in M:
\mu^{(i)}>0
\text{ and }
\mu^{(h)}=0
\text{ for every }h\prec_M i
\}
\]
form a partition of \(M\).

We now use the ordered tree-path structure of \(\N\). By Lemma \ref{lem:mu-is-tree-child-order}, \(\prec_M\) is a tree-path order on \([n]\) for \(\N\). By Proposition \ref{prop:ordering-is-decomposition}, \((\prec_M, \mathcal{P}_{\N, \prec_M})\) is an ordered tree-path decomposition of \(\N\), where
\[
\mathcal{P}_{\N, \prec_M}=\{P_{\prec_M}(l_i):i\in[n]\}.
\]
By Proposition~\ref{prop:mu-partition-from-ordered-decomposition},
$
\mu(P_{\prec_M}(l_i))=P_i(M)$
 for every $i\in[n]$.
Since \(\N\) has no elementary paths, by Proposition \ref{prop:mu_is_set}, distinct vertices have distinct \(\mu\)-vectors. Hence the image under \(\mu\) of each path is a chain for the coordinatewise order. Therefore, for every \(i\), we can write
\[
P_i(M)=\{\mu_i^1,\mu_i^2,\ldots,\mu_i^{m_i}\},
\qquad
\delta_i=\mu_i^1<\mu_i^2<\cdots<\mu_i^{m_i},
\]
and denote \(\rho_i:=\mu_i^{m_i}\). Moreover, we can write
$P_{\prec_M}(l_i)=(v_i^{m_i},\ldots,v_i^1=l_i)
$
so that \(\mu(v_i^j)=\mu_i^j\) for every \(j\).

We now prove condition \((4)\). Let \(i\in[n]\) and \(j=2,\ldots,m_i\). In the ordered tree-path decomposition, \(v_i^{j-1}\) is the tree child of \(v_i^j\) lying in the same path. All other children of \(v_i^j\), if any, are the maximal vertices of later paths. Let \(B_i^j\) be the set of indices \(k\) such that there is a bridge arc \((v_i^j,v_k^{m_k})\). By the definition of an ordered tree-path decomposition,
$B_i^j\subseteq\{k:i\prec_M k\}$.
Moreover, \(B_i^j\neq\emptyset\). Indeed, if \(B_i^j=\emptyset\), then \(v_i^j\) would have the unique child \(v_i^{j-1}\), which is a tree vertex, and the arc \((v_i^j,v_i^{j-1})\) would be elementary, contrary to the hypothesis.
Moreover, \(\mu(v_k^{m_k})=\rho_k\). Hence, by Lemma \ref{lem:children_mu},
\[
\mu_i^j
=
\mu_i^{j-1}
+
\sum_{k\in B_i^j}\rho_k.
\]
Equivalently,
\[
\mu_i^j-\mu_i^{j-1}
=
\sum_{k\in B_i^j}\rho_k.
\]


Therefore \(M\) is tree-child \(\mu\)-compatible.
\end{proof}

\begin{thm}
\label{thm:tc-mu-characterization}
Let \(M\subseteq\mathbb N^n\) be a finite set of vectors. Then \(M\) is the
\(\mu\)-representation of a tree-child phylogenetic network on \([n]\) without elementary paths 
if and only if \(M\) is tree-child \(\mu\)-compatible.

Moreover, if such a network exists, then it is unique up to isomorphism.
\end{thm}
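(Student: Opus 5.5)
The plan is to assemble the theorem from the three propositions just proved, together with the isomorphism result of Cardona et al. The argument splits into the two directions of the equivalence and then the uniqueness clause.

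\emph{Necessity.} Assume \(M=\mu(\N)\) for some tree-child network \(\N\) on \([n]\) without elementary paths. By Proposition~\ref{prop:mu_is_set}, \(\mu(\N)\) is genuinely a set. Proposition~\ref{prop:mu-set-is-compatible} then states directly that \(M\) is tree-child \(\mu\)-compatible, so nothing further is needed here.

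\emph{Sufficiency.} Assume \(M\) is tree-child \(\mu\)-compatible. Fix the subsets \(B_i^j\) of condition~(4) and build the reconstruction digraph \(\N(M)\) of Definition~\ref{def:mu-reconstruction-digraph}, with leaves labelled by \(\delta_i\mapsto i\). Proposition~\ref{prop:reconstruction-is-tc-network} shows that \(\N(M)\) is a tree-child phylogenetic network on \([n]\) with no elementary paths. Proposition~\ref{prop:reconstruction-has-mu-set} shows that \(\mu(\N(M))=M\). Hence \(M\) is the \(\mu\)-representation of such a network.

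One point deserves a remark: \(\mu(\N(M))\) is a multiset, while \(M\) is a set. Proposition~\ref{prop:reconstruction-has-mu-set} proves that each vertex \(\mu\in M\) has \(\mu\)-vector equal to itself. The vertices of \(\N(M)\) are distinct elements of \(M\), so their \(\mu\)-vectors are pairwise distinct and the multiset coincides with the set \(M\).

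\emph{Uniqueness.} Suppose \(\N\) and \(\N'\) are tree-child networks on \([n]\) without elementary paths with \(\mu(\N)=\mu(\N')=M\). Theorem~\ref{thm:mu:cardona-isomorphism} gives \(\N\cong\N'\) by a label-preserving isomorphism. The paper has remarked that this theorem remains valid in the present non-binary setting without elementary paths, so it applies directly.

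As a consequence, the isomorphism class does not depend on the choice of the sets \(B_i^j\) in condition~(4). Every admissible choice yields a network whose \(\mu\)-representation is \(M\), so any two choices produce isomorphic reconstruction digraphs.

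The main obstacle is whether Theorem~\ref{thm:mu:cardona-isomorphism} really covers the non-binary, arbitrary-in-degree setting, since the rest is bookkeeping. If one prefers not to rely on it, there is a fallback, which I would check carefully before committing to it.

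By Lemma~\ref{lem:mu-is-tree-child-order}, \(\prec_M\) is a tree-path order for \(\N\). Propositions~\ref{prop:ordering-is-decomposition} and~\ref{prop:mu-partition-from-ordered-decomposition} then identify the vertex sets of the paths \(P_{\prec_M}(l_i)\), via \(\mu\), with the chains \(P_i(M)\), and these chains are intrinsic to \(M\). Internal arcs are therefore determined by \(M\).

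The bridges are determined as well. For each vertex \(v_i^j\), the set of bridge targets \(B_i^j\) is the unique subset satisfying \(\mu_i^j-\mu_i^{j-1}=\sum_{k\in B_i^j}\rho_k\). This is the fragile step, since uniqueness of that subset-sum representation must be verified.

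To verify it, I would take two representations and look at the \(\prec_M\)-first index \(k\) where they differ. Its coordinate \(k\) receives a positive contribution \(\rho_k^{(k)}>0\) only from \(\rho_k\) itself and from earlier \(\rho_h\). The earlier terms agree in both representations, so the two sums have different \(k\)-th coordinates, which is a contradiction.
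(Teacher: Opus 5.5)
Your proof is correct and follows the paper's own proof. Sufficiency comes from Propositions~\ref{prop:reconstruction-is-tc-network} and~\ref{prop:reconstruction-has-mu-set}, necessity from Proposition~\ref{prop:mu-set-is-compatible}, and uniqueness from Theorem~\ref{thm:mu:cardona-isomorphism}. Your optional fallback is also sound: the first-differing-index argument shows that $B_i^j$ is forced by $M$, so any realizing network is isomorphic to $\N(M)$ via $v\mapsto\mu(v)$, which gives a self-contained uniqueness proof where the paper simply cites Cardona et al.
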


\begin{proof}
Assume first that \(M\) is tree-child \(\mu\)-compatible. By
Proposition~\ref{prop:reconstruction-is-tc-network}, the reconstruction digraph
$\N(M)$, together with the leaf labelling \(\delta_i \mapsto i\),
is a tree-child phylogenetic network without elementary paths on $[n]$. Moreover, by
Proposition~\ref{prop:reconstruction-has-mu-set}, we have $\mu(\N(M))=M$.
Thus \(M\) is the \(\mu\)-representation of a tree-child phylogenetic network without
elementary paths.

Conversely, suppose that there exists a tree-child phylogenetic network \(\N\) on \([n]\)
without elementary paths such that
$M=\mu(\N)$.
Then, by Proposition~\ref{prop:mu-set-is-compatible}, the set \(M\) is tree-child
\(\mu\)-compatible.

Finally, suppose that \(\N\) and \(\N'\) are two tree-child phylogenetic networks on \([n]\)
without elementary paths such that
$\mu(\N)=M=\mu(\N')$.
By Theorem~\ref{thm:mu:cardona-isomorphism}, $
\N\cong \N'$.
Therefore, the realizing network is unique up to isomorphism.
\end{proof}




\section{Conclusion}\label{sec:Conc}

We have characterized tree-child phylogenetic networks by ordered tree-path decompositions. For networks without elementary paths, translating this structure into \(\mathbb N^n\) gives the notion of tree-child \(\mu\)-compatibility. Our main theorem proves that tree-child \(\mu\)-compatibility is equivalent to being realizable as the \(\mu\)-representation of a tree-child phylogenetic network. Together with the known completeness of the \(\mu\)-representation for tree-child networks, this gives a complete description of the feasible \(\mu\)-space for this class.

This characterization provides the basis for several future developments,
some of which we will present in subsequent versions of this work. In particular, 
we will use the chracterization to develop 
procedures to randomly generate tree-child networks. 
We will also investigate ways to compute a consensus for a
collection of tree-child phylogenetic networks motivated in part by results in
\cite{huber2021phylogenetic}.

\vfill

\noindent \sloppy \textbf{Acknowledgements.} 
J.C.P., T.F., and V.M. was supported by Grant PID2025-169396NA-I00 funded by MICIU/AEI/10.13039/501100011033 and by ERDF/EU.
J.C.P., V.M. and K.T.H. would also like to thank
the organisers of the events
`Mathematics of Evolution-Phylogenetic Trees and Networks',
Institute of Mathematical Sciences, Singapore, 2023, and 
`Uniting Phylogenetic Network Research', Lorentz Center, Netherlands, 2026, 
in which they started to work on and 
then develop the ideas in this paper.

\bibliographystyle{alpha}
\bibliography{mybibfile}

\end{document}